\def\Tcal{{\mathcal T}}
\def\Ncal{{\cal N}}
\def\Pcal{{\mathcal{P}}}
\def\Rcal{{\cal R}}
\def\Scal{{\mathcal{S}}}
\def\Ical{{\mathscr I}} 
\def\Ncal{{\mathcal{N}}}
\def\Rcal{{\mathcal{R}}}
\def\UN{{\Ncal}}
\def\UNbar{{\overline{\UN}}}
\definecolor{mygreen}{rgb}{0.192,0.64,0.05}
\def\R{\mbox{$\mathbb{R}$}}
\def\N{{\mathbb{N}}}
\def\Q{\mbox{$\mathbb{Q}$}}
\def\1{\mbox{\boldmath$1$}}
\def\uo{{\overline{u}}}
\def\implies{{\Rightarrow}}
\def\e{{\mathop {\rm e}\nolimits}}
\def\eps{\varepsilon}
\def\dis{\displaystyle}
\newcommand{\inter}[1]{\mathrm{int}(#1)}
\def\e{\mathop{\rm e}\nolimits}
\def\mint{{-}\hspace{-2.5ex}\int}
\def\loc{{\mathop {\rm loc}\nolimits}}
\def\ub{{\overline u}}
\DeclarePairedDelimiterX{\inp}[2]{\langle}{\rangle}{#1, #2}
\DeclareMathOperator{\ext}{ext}
\DeclareMathOperator{\co}{co}
\DeclareMathOperator{\cone}{cone}
\DeclareMathOperator*{\argmax}{argmax}
\def\tb{{\overline t}}
\newtheorem{definizione}{Definizione}[section]
\newtheorem{lemma}[definizione]{Lemma}
\newtheorem{theorem}[definizione]{Theorem}
\newtheorem{definition}[definizione]{Definition}
\newtheorem{proposition}[definizione]{Proposition}
\newtheorem{corollary}[definizione]{Corollary}
\newtheorem*{notation}{Notation}
\newtheoremstyle{myremark}
  {}   
  {}   
  {\normalfont} 
  {}   
  {\bfseries}   
  {.}
  { }
  {}
\theoremstyle{myremark}
\newtheorem{remark}[definizione]{Remark}
\newtheorem{example}[definizione]{Example}
\date{}
\begin{document}

\title{
On Zermelo's  planar navigation problem  for  convex bodies,\\
and implications for non-convex optimal  routing}

\author{}

\author{Matteo Della Rossa}
\author{Lorenzo Freddi} 
\author{Mattia Pinatto}
\address[Matteo Della Rossa]{Dipartimento di Elettronica e Telecomunicazioni, Politecnico di Torino, corso Duca degli Abruzzi 24, 10129 Torino, Italy }
\email{matteo.dellarossa@polito.it}
\address[Lorenzo Freddi, Mattia Pinatto]{Dipartimento di Scienze Matematiche, Informatiche e Fisiche,  Universit\`a di Udine, via delle Scienze 206, 33100 Udine, Italy}
\email{lorenzo.freddi@uniud.it, pinatto.mattia@spes.uniud.it}

\begin{abstract} 
We study a generalized version of Zermelo’s navigation problem where the set of admissible velocities is a general  compact convex  set, replacing the classical Euclidean ball. After establishing existence results under the natural assumption of weak currents, we derive necessary optimality conditions via Pontryagin’s maximum principle and convex analysis. Consequently, in the planar case, the domain of any optimal control is shown to be partitioned into regular and singular regimes. In the former, 
the optimal control is regular and{\color{black},  under mild additional assumptions,} satisfies a Zermelo-like navigation equation while in the latter it is largely undetermined. A necessary condition that can exclude singular regimes is stated and proved, providing a useful tool in applications.  In regular regimes our results extend the   classical Zermelo navigation equation to general convex control sets within a non-parametric setting. Furthermore, we discuss direct applications to the case of a non-convex control set. 
As an application, we develop the relevant case of an affine current. The results are illustrated with examples relevant to sailing and ship routing with asymmetric or sail-assisted propulsion, including the  presence of waves.
\end{abstract}

\maketitle

\textbf{Keywords:}  Zermelo navigation problem,  navigation equations, time-optimal control, convex analysis, Pontryagin maximum principle,  singular regimes, convex sets, non-convex routing

\medskip

\textbf{2020 Mathematics Subject Classification:} 34H05, 49J15, 49J45, 49K15,   49N90, 52A20

\section{Introduction}

  In 1931, Ernst Zermelo \cite{ZermeloII} posed his   classical navigation   problem: ``in an unlimited plane, where the distribution of the wind is given by means of a vector field depending on position and time, an airplane moves with constant velocity relative to the air. How must the airplane be directed in order to reach a point 
$B$, in the shortest possible time, starting from a fixed point 
$A$?'' This problem and its generalizations to three or more dimensions has been studied by Zermelo himself and other renowned mathematicians including Levi-Civita \cite{LC1931}, von Mises \cite{vM1931},  Carath\'eodory \cite{Carath1967} and Mani\`a \cite{Mania1937}. 
For further details on the contributions of these authors, we refer to Mani\`a’s paper \cite{Mania1937}.   
However, all of them adhere to  Zermelo's original setting regarding the choice of a sphere as control set $U$ of admissible velocities.  
A relaxed, but in fact equivalent, formulation of the problem is obtained by taking $U$ to be a ball (the convex hull of the sphere), see \cite[Theorem 2.2]{CFM06}. 
In this paper we extend the analysis by allowing the control set $U$ to be a general convex set and, in the case of planar navigation, we recover Zermelo's navigation equation (ZNE, for short) in the particular case in which $U$ is a ball.

More recently, several variants of Zermelo's problem have been considered and studied in the case of  particular flows (see, e.g.~\cite{BBBCG2019}) or navigation on manifolds (see, e.g.~\cite{Serres2009,K2019,BCW2023}). 
See also \cite{PBG2022} for applications to autonomous navigation of microswimmers.

Clearly, the choice of the airplane as a navigation vehicle is merely illustrative. In fact,
in the case 
$n=2$ the problem arises even more naturally in  planar navigation of a ship in the presence of current. The case where 
$U$ is a ball corresponds to motor navigation. The ball is deformed into a shifted ellipse as an effect of the action of waves (see, e.g.,  \cite[Fig. 8]{sake}). More complex velocity sets (including those considered in this work) characterize the performance of sail-assisted cargo ships, where the use of (usually rigid)  sails or wings primarily serves the purpose of energy saving and $CO_2$ 
 emission reduction (see, e.g., \cite{RVHB2021}). 
As regards the planar case, it is worth recalling that there is a long-standing tradition of approaches to the optimal control of low-dimensional systems, starting from the works of Sussmann, Krener, Schättler, and others \cite{Sussmann1987_1,KS1989}, and based on a systematic use of Lie bracket conditions. 
More recent contributions, such as \cite{BCW2023}, also fall within this line of research and provide a thorough analysis. 
These approaches typically investigate accessibility and optimality through the Lie algebra rank condition and the geometry of the exponential map, and are naturally tailored to single-input control systems. 
When adapted to the classical Zermelo framework, to the best of our knowledge, they allow at most for smoothly parameterized control sets, such as ellipses (Example \ref{ex_shift_ellip} falls in this setting, and the subsequent Remark \ref{rem:BCW23comparing} links to the approach of \cite{BCW2023}). 
By contrast, in our setting the control variable ranges over a full compact convex set $U \subset \mathbb{R}^2$. 
Whenever $0 \in \operatorname{int}(U)$, the system is locally of full rank in a trivial way, so that accessibility issues addressed via Lie bracket conditions do not play a central role. 
The phenomena we investigate, namely the partition into regular and singular regimes and the possible emergence of tack points, are therefore not related to a loss of rank of the exponential map or to conjugate points in the sense of geometric control theory. 
Rather, they arise from the interaction between the drift and the convex geometry of $U$, especially in the presence of non-strict convexity of its boundary. 
This justifies the use of convex-analytic tools as a natural framework in our study,  that  focuses  on allowing general control sets, as compact (non-)convex sets. A contribution in this direction has been given in \cite{CFS2000}, where an additional necessary optimality condition requires that the  costates be included in the superdifferential of the minimum time function.  Other relevant  contributions come from a recent  different approach that uses Finsler metrics and differential geometry  
(see \cite{FB2022,FB2024,MPRR2025} and references therein). In contrast, our method relies on optimal control tools, specifically Pontryagin Maximum Principle and convex analysis, similar to \cite{CFS2000}. However, unlike previous approaches, we do not require prior knowledge of the minimum-time function, which depends heavily on the configuration of the initial set $A$ and target region $B$, as well as on the control set.   In fact, our  approach naturally allows for a general choice of $A$ and $B$, that are assumed to be single points only when necessary or not restrictive.

Zermelo's navigation problem can be formulated as a {\em minimum-time control problem}  
from the region $A$ to the region $B$ with dynamics 
\begin{equation}\label{eq_state_equation}
\dot x(t)=u(t)+
        s(t,x(t)),
\end{equation}
where $u(t)\in U$ is the control and the drift term $
        s(t,x)$ describes the {\em current}. 
The problem 
consists in determining the control function $u$ 
that minimizes the travel time $t_f$  
over the space of all admissible trajectories $x$ leading from $A$ to $B$.  

After proving the existence of optimal solutions, we derive necessary optimality conditions via the Pontryagin Maximum Principle. Our approach is particularly effective in dimension two, although some of the results hold in arbitrary finite dimension.  Indeed, in the planar case, classical tools  of convex analysis are used to prove that, 
when the conjugate variable is not 
normal to a segment on the boundary (regular regime),
the optimal control is regular  (Lemma~\ref{lm:geo_conv}). This allows us  to 
eliminate the co-states  and obtain a differential equation, involving only $u$ and $x$, that generalizes the classical Zermelo navigation equation; this is stated in Theorem \ref{th:char_u_convex-case}. 
Such navigation equation can be coupled with the state equation to characterize the time optimal controls.  A remarkable fact about our derivation is that, even if obtained in a non-autonomous setting, it does not rely on conservation nor on the evolution law of the Hamiltonian. This observation shows that the Zermelo navigation equation is not a consequence of energy conservation, but rather stems from the intrinsic geometric structure of the problem. Moreover   it  arises in a non-parametric setting, that is, without {\em a priori} imposing any specific parametrization of the control. This flexibility is particularly relevant in applications, where the choice of the parametrization naturally depends on the geometry of $U$. 
For instance, in the case in which $U$ is a ball and polar coordinates are used, we recover the classical, well known, Zermelo navigation equation. Nevertheless, our setting extends its validity to strictly convex control sets. 
Conversely, when the control set is non-strictly convex, the Zermelo equation holds only within specific time intervals, which we refer to as {\em regular regimes}. 
When, on the contrary, the conjugate variable stays along the normal to a segment on the boundary then Zermelo equation does not hold and we say that a {\em singular regime} occurs.  Generally  speaking,   regular  regimes may  alternate with  singular ones or be separated by {\em tack points}  where the trajectory's derivative may jump.  This phenomenon can arise only when the control set is not strictly convex and, in the general convex case, we are able to provide a necessary condition (on the gradient of the current along the trajectory)  under which   a singular regime may occur (Theorem~\ref{th:sing_reg_NC}). 
This tool 
is particularly effective when the current is position-affine since, in such case,  $\nabla_x s$ is independent of $x$. A detailed study is devoted to this type of currents because they can be regarded as first-order approximations of more complex current fields. In particular, in Theorem \ref{th:affine_regimes}, we show that under the assumption of an affine current, regular and singular regimes cannot coexist, and the only possible scenarios are either a single global singular regime or one or more contiguous regular regimes. The latter reduce to only two in the case where the current $x$-gradient has real eigenvalues (which, actually, rules out the presence of a vortex). 

We conclude the analysis by presenting some direct consequences for the case of non-convex control sets and by studying some relevant examples in practical navigation, such as situations involving waves and/or sailing navigation.


\

{\bf Notation.} 
Let $x, y \in \mathbb{R}^n$. We denote by $(x,y):=\{\lambda x+(1-\lambda )y\ : \ \lambda\in(0,1)\}$ 
the (relatively) {\em open segment with endpoints $x$ and $y$} and, whenever $x\ne y$,  $[x, y]:=\overline{(x,y)}$, is the {\em closed} segment. The usual Euclidean scalar product will be denoted by $\inp{x}{y}$, and $|x|$ denotes the corresponding norm. 
 The notation used for spaces of functions is standard. Namely, for maps defined on an interval $I$ and values in a subset $E$ of $\R^d$ we denote by $C^k(I,E)$  the space of functions with continuous $k$-th derivatives,   $L^p(I,E)$ the Lebesgue space of (equivalence classes of) $p$-summable (if $p\in[1,+\infty)$) or essentially bounded (if $p=\infty$) functions,  
and $W^{1,p}(I,E)$ the Sobolev space of
(equivalence classes of) functions that are in $L^p$  together with their distributional derivatives.
The reference to the set $E$ is often omitted when $E=\R$. Moreover, $\R_+:=(0,\infty)$. As is customary, we denote derivatives with respect to time by dots and derivatives with respect to other variables by primes.

\section{Formulation of the problem 
 }\label{sec_Problem_setting}

Given subsets $A,B,U\subset \R^n$  and $s:\R_+\!\times \R^n\to\R^n$ regular enough, we denote by  $\mathcal{P}_{A,B,U,s}$ the \emph{minimum-time control problem} 
already explained in the introduction (see \eqref{eq_state_equation}) 
and that can be also summarized in the following  formulation as a Lagrange or Mayer  optimal control problem
\begin{subequations}
\label{eq:ProblemZermelo}
    \begin{eqnarray}
    \label{eq:cost_funct}
        &\displaystyle\min_{\displaystyle u\in L^0(\R_+,\R^n)}& \int_0^{t_f} \;dt\\
    \label{Eq: Dinamic Equation}
    &&\hspace{-12ex}
    \begin{cases}
        \dot x(t)=u(t)+
        s(t,x(t)),\\
        x(0)=x_0,
    \end{cases}\\
    \label{eq:init_fin_cond}
        &&\hspace{-12ex}x_0\in A,\ x(t_f)\in B,\\
    \label{eq:Constraint}
        &&\hspace{-12ex}u(t)\in U \;\;\text{for a.a.\ } t\in (0,t_f),
    \end{eqnarray}
\end{subequations}
where $L^0(\mathbb{R}_+, \mathbb{R}^n)$ denotes the space of (equivalence classes of) $\R^n$-valued Lebesgue measurable functions defined on the interval $(0,\infty)$. The same problem will be denoted simply by $\Pcal_U$ when the sets $A$, $B$ and the current field $s$ are clear from the context.   
An {\em admissible pair} for  problem $\mathcal{P}_{A,B,U,s}$  is a pair  \((u, x) \in L^0(\mathbb{R}_+, \mathbb{R}^n) \times W^{1,1}_\loc(\mathbb{R}_+, \mathbb{R}^n)\)  that  satisfies conditions \eqref{Eq: Dinamic Equation}--\eqref{eq:Constraint}. 
An admissible pair that achieves the minimum in~\eqref{eq:cost_funct}, if it exists, is called a  {\em solution} to the minimum-time control problem and $u$ is said to be a {\em time-optimal control}.
Our aim is to establish sufficient conditions for the existence of  solutions and characterize the time-optimal controls.  

Throughout the whole paper, the set {\em  $U$ is nonempty and compact} while the ending regions {\em $A$ and $B$ are non-empty, closed and disjoint} sets admitting representations of the form $A=\{x\in\R^n\ :\ \phi_i(x)\le 0,\,i=1,...,k\}$
and 
$B=\{x\in\R^n\ :\ \psi_j(x)\le0,\,j=1,...,m\}$ with $\phi_i,\psi_j\in C^1(\R^n)$ and $k,m\in\N$.

\section{Existence of a solution}\label{sec_existence}

In this section 
we provide conditions 
that, besides the standing assumptions made in Section \ref{sec_Problem_setting} on the sets $U$, $A$, $B$,  
ensure the existence of  a solution to problem $\mathcal{P}_{A,B,U,s}$.

The following hypotheses, that classically ensure global existence and uniqueness of the solution to the Cauchy problem \eqref{Eq: Dinamic Equation} for every control function, are assumed to be satisfied:
\begin{enumerate}
    \item[(s1)] the function $s(t,x)$ is  locally Lipschitz continuous in $x$  uniformly w.r.t.\ $t\in\R_+$;
    \item[(s2)]   there exists a constant $M$ such that {$|s(t,x)|\le M(1+|x|)$ $\forall\,(t,x)\in \R_+\times\R^n$.} 
\end{enumerate}
Given a control function $u\in L^0(\R_+,\R^n)$ and a starting point $x_0\in A$, we denote by $x^{x_0,u}\in W^{1,1}_\loc(\R_+,\R^n)$  the corresponding solution of \eqref{Eq: Dinamic Equation}. The {\em time to reach $B$} starting from $x_0\in A$ with control $u$ is defined by
$\tau(x_0,u):=\inf\{t\in[0,\infty)\ :\ x^{x_0,u}(t)\in B\}$, 
with the convention $\inf\varnothing=+\infty$ (which means that 
$\tau(x_0,u)=+\infty$ if $u$ does not steer $x$ to $B$ in finite time). The functional  $\Tcal:A\to[0,+\infty]$, defined as
\begin{equation}
    \label{def:MTF}
\Tcal(x_0):=\inf\{\tau(x_0,u)\ :\ u\in  L^0(\R_+,U)\},
\end{equation}
is called {\em minimum-time function}. 
When $U$ is convex, $\Tcal$ is lower semicontinuous. This important fact  has been proved in \cite[Proposition 2.1]{CQSP97} for autonomous systems, but the same proof works also in the case of a current depending on $t$.


\begin{lemma}
\label{lem:Tfinite} 
    Suppose that 
    $U$ is a nonempty compact convex subset of $\R^n$  and assumptions {\em(s1)} and {\em (s2)} are satisfied. 
    For every $x_0\in A$ such that $\Tcal(x_0)<+\infty$ the infimum in the definition \eqref{def:MTF} of  $\Tcal(x_0)$ is  attained.
\end{lemma}

\begin{proof}
    The proof is a straightforward application of  \cite[Theorem 5.1.1 and Remark 5.2(i)]{bressan_introduction_2007} because the finiteness of $\Tcal(x_0)$ implies that there is at least one admissible trajectory that joins $A$ to $B$ in finite time.  
\end{proof}


\noindent We are now in position to state the existence theorem in full generality.

\begin{theorem}
\label{th_CS_existence}  
Suppose that 
    $U$ is a nonempty compact convex subset of $\R^n$,   and assumptions {\em(s1)} and {\em (s2)} are satisfied. 
    Assume, moreover, that $A$ is compact and that there exists at least one admissible control driving the system from $A$ to $B$ in finite time. Then, problem $\Pcal_{A,B,U,s}$ 
    admits at least one time-optimal control.
\end{theorem}

\begin{proof}
    Since the minimum-time function $\Tcal$ is lower semicontinuous and  $A$ is compact, by Weierstrass theorem there exists the minimum of $\Tcal$ on $A$. 
    The possibility that the minimum value be $+\infty$, that is $\Tcal\equiv+\infty$, is excluded by the assumption that there exists at least one admissible control steering the solution  from $A$ to $B$ in finite time. 
    Hence,  there exists $x_0\in A$ such that
    $\Tcal(x_0)=\min_A \Tcal<+\infty$ 
    and, by Lemma \ref{lem:Tfinite},  there exists $u\in L^0(\R_+,U)$ such that
    $\Tcal(x_0)=\tau(x_0,u)$. 
    Such control $u$ is a solution of the minimum-time control problem.     
\end{proof}

The last assumption of Theorem \ref{th_CS_existence}, about the existence of  at least one admissible control steering the system from $A$ to  $B$ in finite time is satisfied if the current is not too strong, as the following corollary states.

\begin{corollary}[case of a weak current]\label{cor_weak_stream_ex} Besides the assumptions of Theorem \ref{th_CS_existence} on the sets $U$, $A$, and the current $s$, 
    suppose further that there exists a  Lipschitz connected set  $X$ (see \cite{B2019}) 
    containing $A$ and $B$ in which the following {\em weak current assumption} holds: 
        \begin{enumerate}
                \item[]\hspace{-4ex}{(WC)} there exist $\varepsilon>\delta>0$ such that $B(0,\varepsilon)\subseteq \,U$  and $|s(t,x)|<\delta$  for all $x\in X$ and $t\ge0$.
        \end{enumerate} 
    Then, problem $\Pcal_{A,B,U,s}$ 
    admits a solution.
\end{corollary}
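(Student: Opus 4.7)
My plan is to reduce the corollary to Theorem~\ref{th_CS_existence}. All hypotheses of that theorem are already among those of the corollary, \emph{except} the existence of at least one admissible control driving the vehicle from $A$ to $B$ in finite time. The whole work therefore consists in constructing one such control, using (WC) and the Lipschitz connectedness of $X$.

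To construct it, I would pick arbitrary $a\in A$ and $b\in B$ (both contained in $X$, since $X$ is a neighborhood of $A$ and $B$), use the Lipschitz connectedness of $X$ to choose a Lipschitz curve in $X$ joining $a$ to $b$, then reparametrize it by arclength and rescale time so as to obtain a curve $\gamma\colon[0,T]\to X$ with $\gamma(0)=a$, $\gamma(T)=b$ and $|\gamma'(t)|=\varepsilon-\delta$ for a.e. $t\in[0,T]$. Setting
\[
u(t):=\gamma'(t)-s(\gamma(t)) \quad\text{on } [0,T], \qquad u(t):=0 \quad\text{on } (T,+\infty),
\]
one gets a measurable, essentially bounded function (since $\gamma'\in L^\infty$ and $s$ is continuous on the compact set $\gamma([0,T])\subset X$), and by (WC)
\[
|u(t)|\le |\gamma'(t)|+|s(\gamma(t))|<(\varepsilon-\delta)+\delta=\varepsilon \quad\text{a.e. on } [0,T].
\]
Thus $u(t)\in B(0,\varepsilon)\subseteq U$ on $[0,T]$, while on $(T,+\infty)$ the value $0$ also belongs to $U$. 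Hence $u\in L^\infty(\R_+,U)$.

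By construction $\gamma$ solves $\gamma'=u+s(\gamma)$ with $\gamma(0)=a\in A$, so the uniqueness of the Cauchy problem~\eqref{Eq: Dinamic Equation} guaranteed by (s1) forces $x^{a,u}\equiv\gamma$ on $[0,T]$. In particular $x^{a,u}(T)=b\in B$, whence $\tau(a,u)\le T<+\infty$. The conclusion then follows immediately from Theorem~\ref{th_CS_existence}. The only technical point I expect to be worth writing out carefully is the arclength reparametrization and speed rescaling of a Lipschitz curve (together with the consequent measurability of $\gamma'$ and of $u$); everything else is a routine verification.
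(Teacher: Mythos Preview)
Your proof is correct and follows essentially the same route as the paper: both reduce the corollary to Theorem~\ref{th_CS_existence} by constructing an admissible control via a Lipschitz curve $\gamma$ in $X$ joining $A$ to $B$ with speed at most $r=\varepsilon-\delta$, and setting $u(t)=\gamma'(t)-s(\gamma(t))$. Your version is slightly more explicit (arclength reparametrization, extension by $0$ past time $T$, and the appeal to Cauchy uniqueness), but the argument is the same.
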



\begin{proof} 
    It follows by the general Theorem \ref{th_CS_existence}  by observing that, under our assumptions, for every $x_0\in A$ there exists an admissible control joining $x_0$ and $B$ in finite time. 
    Indeed, setting $r:=\varepsilon-\delta>0$,  we have {$\overline{B(0,r)}\subseteq U+s(t,x)$ for every $x\in \R^n$ and $t\ge0$.} Since  $X$  is Lipschitz connected, there exist $T_r>0$ and a Lipschitz continuous curve $x:[0,T_r]\to X$ joining $A$ and $B$ such that $\|\dot x\|_\infty\le r$. This implies $\dot x(t)\in U+s(t,x(t))$ for a.e.\ $t\in[0,T_r]$. Thus, the control  $u(t):=\dot x(t)-s(t,x(t))$  drives the state from $A$ to $B$ in finite time. 
\end{proof}

\begin{remark}{The assumption of Lipschitz connection  is always satisfied if $X$ is open and connected. 
    The assumption of dealing with a weak current is not restrictive in the case of maritime transportation, where the vessel's power is sufficient to significantly exceed the speed of currents expected during the navigation.    }
\end{remark}

\section{Pontryagin necessary optimality conditions}\label{sec_Pontryagin}

In this section we provide necessary optimality conditions derived from Pontryagin Maximum Principle (PMP).   
Let us introduce the \emph{Hamiltonian function} $H:\R_+\!\times (\R^{n})^3\times \R\to \R$ defined by $H(t,x,u,p,p_0):=p_0+\inp{p}{u}+\inp{p}{
        s(t,x)}$.

\begin{lemma}[PMP]
\label{lem_PMP} 
Let $(u,x)\in  L^0(\R_+,\R^n)\times W^{1,1}_\loc(\R_+,\R^n)$ be an optimal pair of problem $\Pcal_{A,B,U,s}$
with $s\in C^1(\R_+\times\R^n,\R^n)$. Then, there exist $p_0\in \{0,-1\}$ and $p\in W^{1,1}((0,t_f),\R^n)$ such that  

    \begin{enumerate}[label=(\arabic*)]
        \item \emph{(nondegeneracy)} $p(t)\ne0$ for every $t\in[0,t_f]$;
        \item \emph{(adjoint equation and transversality conditions)} the adjoint function $p$ satisfies 
            \begin{equation} 
            \label{eq:AdjointEquation}
            \begin{cases}
               {\dot p(t)=-(\nabla_x s)^\top\! (t,x(t))\, p(t)}\quad \mbox{for a.e.\ }t\in(0,t_f),\\[1ex]
                p(0)\in  N_A(x(0)),\quad p(t_f)\in -N_B(x(t_f)),
            \end{cases}
        \end{equation}
        where $N_A$ and $N_B$ denote the normal cones to $A$ and $B$ respectively (see \cite[Section 1.4]{Clarke2013}), while $\nabla_x s=({\partial s_i}/{\partial x_j})_{i,j\in\{1,\dots,n\}}$;
        \label{Item:AdjointEquation}
        \item\label{eq_wc} \emph{(Weierstrass condition)} the optimal control $u$ satisfies 
        \[
            u(t)\in \argmax_{u\in U}\,\inp{p(t)}{u}\quad \mbox{ for a.a.\  $t\in (0,t_f)$};
        \]
        \item \label{Pt:evolution_H} {\emph{(evolution law of the Hamiltonian)}  the function $t\mapsto \inp{p(t)}{u(t)+s(t,x(t))}$ coincides a.e.\ with an absolutely continuous function (not renamed) satisfying 
        \begin{equation}\label{eq_cht}
            \frac{d}{dt}\inp{p(t)}{u(t)+s(t,x(t))}=
            \inp{p(t)}{\frac{\partial s}{\partial t}(t,x(t))}
            \quad \mbox{ for a.a.\  $t\in (0,t_f)$};
        \end{equation}}
        {\color{black} if $s$ is independent of time then  \begin{equation}\label{eq_ch}
            p_0+\inp{p(t)}{u(t)+s(x(t))}=0\quad \mbox{ for a.a.\  $t\in (0,t_f)$}.
         \end{equation}
    \label{Item:Constancy}}
    \end{enumerate}
\end{lemma}


\begin{proof}
{The result follows from the standard PMP (see, e.g., \cite[Theorem 2.2]{DO2015})  applied to the augmented system
\[
\dot\tau(t)=1, \qquad \dot x(t)=u(t)+s(\tau(t),x(t)),
\]
with state $(\tau,x)\in\mathbb{R}\times\mathbb{R}^n$ and target $\mathbb{R}\times B$.

Let $q(\cdot)$ be the adjoint variable associated with $\tau$. 
Then the corresponding Hamiltonian relation reads
\[
q(t)+p_0+\langle p(t),u(t)+s(t,x(t))\rangle=0,
\]
with $q(t_f)=0$. Differentiating this identity and using the adjoint equations yields the evolution law of the Hamiltonian.

Moreover, if $p(\bar t)=0$ for some $\bar t$, then $p\equiv 0$ by uniqueness of the adjoint equation. 
Hence $\dot q=0$, so that $q\equiv 0$ since $q(t_f)=0$, and the Hamiltonian relation gives $p_0=0$, contradicting the nondegeneracy condition for the augmented system. 
Therefore $p(t)\neq 0$ for every $t\in[0,t_f]$.}
{\color{black} 

In the autonomous case of a current independent of time, 
the evolution law \eqref{eq_cht} of the Hamiltonian reduces to the usual constancy of the Hamiltonian along the optimal solution. It is well-known that, in fact, for the minimum-time problem such constant is $0$ (see, e.g.,  \cite[Theorem 2.2]{DO2015}).}
\end{proof}



\begin{remark}
\label{Rmk: p nonzero}
{(a) In the case in which $U\subset \R^n$ is convex, by introducing the \emph{support function}  of the set $U$, that is $\sigma_U \colon \R^n \to (-\infty,+\infty]$ defined by $\sigma_U(p) := \sup\{ \langle p, x \rangle\;\colon\;x\in U\}$, the Weierstrass condition  \textit{\ref{eq_wc}} can be rewritten in useful equivalent forms by using the following characterization of the subdifferential of $\sigma_U$ which is classical in Convex Analysis 
(see, e.g., \cite[Proposition 9.1.2, Proposition 9.3.1 and Proposition 9.5.4]{attouch_variational_2006}, \cite[Corollary 8.25]{RockWets09}  and   \cite[Theorem 1.7.4]{Schneider1993}):
\begin{equation}\label{eq:subchar}
            \partial\sigma_U(p)=\argmax\{\langle p,u\rangle\colon u\in U\} =\{u\in U\colon p\in N_U(u)\}
        \end{equation}
for every $p\in\R^n$, where  \begin{equation}\label{def_normal_cone}
N_U(u):=\{p\in\R^n\ :\ \inp{p}{v-u}\le 0\ \forall\,v\in U\}
\end{equation}
denotes the normal cone to $U$ in $u$. Accordingly,     
    Weierstrass condition \textit{\ref{eq_wc}} 
    writes 
    $$
    u(t)\in  \partial\sigma_U(p(t)) \quad \mbox{ for a.a.\  $t\in (0,t_f)$}
    $$
    or,  equivalently,  
        \begin{equation}\label{eq:Weierstrass_NU}
            p(t)\in N_U(u(t))\quad 
            \mbox{ for a.a.\  $t\in (0,t_f)$}.
        \end{equation}

        \vspace{1ex}

        \noindent (b) Since $s\in C^1$ and 
        $x$ is continuous, by the adjoint equation we have $p\in C^1((0,t_f);\R^n)$.
}
\end{remark}

In what follows, we rely on the application of the PMP and therefore assume, unless explicitly stated otherwise, that $s\in C^1(\R_+\!\times\R^n)$.

\section{The case of a convex set \texorpdfstring{$U$}{U}}

\label{Sec: Convex Control Set}

In this section we deal with the case in which  the constant control set $U$, besides being  nonempty and compact, is  convex. By exploring consequences of the Pontryagin necessary conditions  provided by Lemma~\ref{lem_PMP},  in dimension $2$, we characterize the optimal controls as composed of two kind of regimes that will be called, respectively, regular and singular, depending on the evolution of the adjoint variable $p(t)$.    

To this aim, it is useful to recall/introduce some standard definitions. 
A subset $U$ of $\R^n$ is {\em convex} if, for all $x$ and $y$ in $U$, the line segment connecting $x$ and $y$ is included in $U$. According to Clarke's terminology we call {\em convex body} every convex set with nonempty interior. 
An element  $u\in U$ is said to be an {\em extreme point} of $U$  if it does not lie in any open line segment joining two points of $U$. A subset $E$ of $U$ is said to be {\em extreme} if every element of $E$ in an extreme point of $U$.  We denote by $\ext(U)$ the set of all extreme points of $U$. 
The set $U$ is {\em strictly convex} if every point on the line segment connecting $x$ and $y$ other than the endpoints is inside the topological interior of $U$ (denoted, in the sequel, by ${\rm int}(U)$).  
As a consequence, we have that a convex set that is not strictly convex has a boundary that contains at least one open segment. It is well known (Ewald-Larman-Rogers Theorem, \cite[Theorem 2.3.1]{Schneider1993}) that, in dimension $n=2$,  the boundary of a convex set can contain an at most  countable family of disjoint maximal open segments. 
Having this in mind, let us restrict to the dimension $2$ and introduce the following definition which  is made ``ad hoc" for the kind of control sets that we are going to consider.

\begin{definition} 
 Let $U\subset\R^2$ be compact and convex, and let $\Ical$ be at most a countable set of indices. We say that  ${\mathcal{S}} :=\{S_i\}_{i\in \Ical}$ is a  family of {\em disjoint maximal (open) segments} $S_i=(x_i,y_i)$ contained in $\partial U$
 if $x_i,y_i\in\partial U$, $\varnothing\ne S_i\subseteq \partial U$, $S_i\cap S_j=\varnothing$ when $i\ne j$ and $\partial U\setminus\cup_{i\in \Ical}S_i$ is an extreme subset of $U$. If $\#(\Ical)=m\in\N\setminus\{0\}$, we say that {\em $\partial U$ contains  exactly $m$ disjoint maximal open segments}.
 \end{definition}

 Note that, $\Scal=\varnothing$ if and only if $U$ is strictly convex.  For every $i\in \Ical$ the closure $\overline{S_i}$ is an {\em exposed face} of the convex body $U$ (see, e.g.,  \cite[Definition 2.4.2]{hiriart-urruty_fundamentals_2004}).

\begin{proposition}\label{def:norm_cone_segment}
Let $U\subset\R^2$ be a convex body whose boundary contains a nonempty open segment $S$.   Then the normal cone $N_U(u_S)$ is
constant  for every  $u_S\in S$. Precisely, it turns out to be the ray $N_S:=N_U(u_S)=\{\lambda p_S\ :\ \lambda\ge0\}$ where $p_S\ne0$ is any  normal vector to the segment $S$, exterior to $U$. $N_S$ will be  called the {\em normal cone to the segment $S$}.
\end{proposition}


\begin{proof}
  By the regularity of $S$ the normal cone $N_U(u_S)$ is either the ray $N_S$ or the whole line $L_S=\{\lambda p_S\ :\ \lambda\in\R\}$.
In the latter case, by definition of normal cone we have
$\langle \lambda p_S,u-u_S\rangle\le0$ for all $u\in U$ and $\lambda\in\R$, 
which clearly implies
$\langle p_S,u-u_S\rangle=0$ for all $u\in U$.  
This means that $U$ is contained in an hyperplane (a line in the case $n=2$) normal to $p_S$. This would imply that the interior of $U$ is empty, against our assumption. Hence the normal cone cannot be the line $L_S$ and the proposition is proved. 
\end{proof}

In the rest of this section we adopt the following notation, consistent with Proposition \ref{def:norm_cone_segment}.


\begin{notation}\label{ass:m_segments}
Given  a compact convex body  $U\subset\R^2$, the family  
 $\{S_i\}_{i\in \Ical}$ (possibly empty) of all   disjoint maximal open segments  $S_i=(x_i,y_i)$ contained in $\partial U$ will be denoted by $\Scal$. Moreover,  $N_i$ denotes the normal cone to the segment $S_i$ and  $\Ncal:=\{N_i\}_{i\in \Ical}$ is the family of such cones.
\end{notation}


With a slight abuse of notation  we denote by the same symbol  $\Ncal$ also the {\em union} of the family $\Ncal$ and by $\UNbar$ it closure. 
Let us note that, by maximality of the segments, the normal cones $N_i$ intersect only at the origin, that is, $N_i\cap N_j=\{0\}$ for every $i,j\in \Ical$ with $i\ne j$. 
In the sequel, it is useful to adopt the convention $\UN:=\{0\}$ if the set $U$ is strictly convex and, hence, the family $\{S_i\}_{i\in \Ical}$ is empty; we note that, in such case, $\R^2\setminus\UN=\R^2\setminus\overline{\UN}=\R^2\setminus\{0\}$.

\begin{lemma}\label{lm:geo_conv} Let $U$ be a compact convex body of $\R^n$. We have 
          $\varnothing\ne\partial\sigma_U(p)=\argmax_{u\in U}\inp{p}{u}\subseteq\partial U$ for every $p\in\R^n\setminus\{0\}$. 
        Moreover, if $n=2$ we have: 
          \begin{enumerate}
              \item\label{lem_reg}
           for any  $p\in \R^2\setminus\UN$ the subdifferential $\partial\sigma_U(p)\in \ext(U)$ is single-valued  and {$\sigma_U\in C^1\big(\R^2\setminus\UNbar\big)$;} \item\label{lem_sing} if there exists $i\in \Ical$ such that $p\in N_i\setminus\{0\}$, then $\partial\sigma_U(p)=\overline{S_i}$.
          \end{enumerate}
\end{lemma}


\begin{proof}{\em of Lemma \ref{lm:geo_conv}.} 
Let us note that  the first assertion of the statement can be proven for a general dimension $n\ge2$.  The fact that $\partial\sigma_U(p)=\argmax_{u\in U}\inp{p}{u}$ has already been observed in Remark \ref{Rmk: p nonzero}(a) for every $p\in\R^n$. Let us prove that if $p\ne0$ then it is non-empty and contained in  the boundary. This claim  could be proven by using Bauer's maximum principle (see, e.g., \cite[Lemma 7.69]{aliprantis_infinite_2006}). To be self-contained we prefer here to provide  a complete independent proof.   
    Let $p\in\R^n\setminus\{0\}$. Since $U$ is compact and convex, by Weierstrass theorem, the optimization problem
    $\max_{u\in U}\, \inp{p}{u}$ 
    admits at least one  solution: let us prove that  all of them  belong to  $\partial U$. Assume by contradiction that a maximizer $\bar u$ belongs to the interior of $U$. Then, there would exist $\varepsilon>0$ such that $B(\overline{u},\varepsilon)\!\subseteq \,\inter{U}$. Since $p\neq0$, there exists $u\in B(\overline{u},\varepsilon)$ such that $\langle p,u-\overline{u}\rangle> 0$. This contradicts the maximality of $\overline{u}$, indeed
        $\langle p,u\rangle=\langle p,\overline{u}\rangle+\langle p,u-\overline{u}\rangle > \langle p,\overline{u}\rangle$. 
  Then, all maximum points belong to the boundary.

Let us prove {\em(\ref{lem_reg})}. Suppose that  $p\in\R^2\setminus \UN$.
        To prove that $\partial\sigma_U(p)=\argmax_{u\in U}\inp{p}{u}$  is single valued,  
        assume by contradiction that there exist two distinct points $\uo_1, \uo_2 \in \partial U$ that are both maximum points of $u\mapsto \langle p,u\rangle$ on $U$. 
        Since $U$ is convex, the segment $[\uo_1, \uo_2]$ is contained in $U$. 
        By linearity of $p$, also every point on the segment is a maximum point. Since the maximum is attained on  $\partial U$  (already proven), the entire segment must lie on the boundary, i.e., $[\uo_1, \uo_2] \subseteq \partial U$. If $\Scal=\varnothing$ (case of a strictly convex $U$) then we have already reached a contradiction. Otherwise, 
 there exists $i_0\in \Ical$  such that  $(\uo_1,\uo_2)\subseteq S_{i_0}$. Since $\uo_1\ne \uo_2$, then there exists $\uo\in(\uo_1,\uo_2)\subseteq S_{i_0}$ and, by maximality,
        $\inp{p}{\uo}\ge \inp{p}{v}$ for all $v\in U$ 
        or, in other words, $p\in N_U(\uo)$ with $\uo\in S_{i_0}$. Hence, $p\in N_{i_0}$ against our initial assumption. This proves the  uniqueness of the maximum and, thus, the subdifferential turns out to be single-valued.  The claimed fact that it must belong to $\ext(U)$  is a consequence of such uniqueness. Indeed, setting $\ub=\partial\sigma_U(p)$ and assuming by contradiction that it was not an extreme point, there would exist $u_1,\,u_2\in U$ and $\lambda\in(0,1)$ such that $\ub=\lambda u_1+(1-\lambda)u_2$. On the other hand, by maximality and uniqueness we have $\inp{p}{u_1}<\inp{p}{\ub}$ and $\inp{p}{u_2}<\inp{p}{\ub}$, which implies
        $\inp{p}{\ub}=\lambda\inp{p}{ u_1}+(1-\lambda)\inp{p}{u_2}<\inp{p}{\ub}$, 
        that is impossible. 

         It remains to prove that {$\sigma_U\in C^1\big(\R^2\setminus \UNbar\big)$, i.e.,  } that $\partial\sigma_U$ is continuous in the open set  $\R^2\setminus \UNbar$ (where it is single valued) by showing that for every $p_n,p\in\R^2\setminus \UNbar$ we have 
        \begin{equation}\label{eq_contu1seg}
            p_n\to p\quad \implies\quad \partial\sigma_U(p_n)\to \partial\sigma_U(p).
        \end{equation}
       Indeed, since the sequence $\partial\sigma_U(p_n)$ is contained in the compact set $U$, then there exists a subsequence $(p_{n_k})$ of $(p_n)$ and $\ub\in U$ such that $\partial\sigma_U(p_{n_k})\to \ub$. On the other hand, we have $\ub=\partial\sigma_U(p)$; indeed, by maximality of the subdifferential, for every $u\in U$ we have
       $$
      \langle p,\ub\rangle=\lim_{k\to\infty}  \langle p_{n_k},\partial\sigma_U(p_{n_k})\rangle\ge
      \liminf_{k\to\infty}  \langle p_{n_k},u\rangle =\langle p,u\rangle.
       $$
       Since the same argument applies to every subsequence (and $\R^2$ is a metric space) we obtain that the whole sequence $\partial\sigma_U(p_n)$ converges to  $\partial\sigma_U(p)$. 
        This proves \eqref{eq_contu1seg} and, therefore, the claimed continuity of the subdifferential.

         Since for $p\in \R^2\setminus\UNbar$ the subdifferential  $\partial\sigma_U(p)$  is single-valued, 
         then it is a usual gradient (see \cite[Proposition 4.16]{Clarke2013}). Being also continuous, then   $\sigma_U\in C^1\big(\R^2\setminus\UNbar\big)$ as claimed. 
\end{proof}

{\color{black} 
\begin{remark}\label{rem:support_nonC2}
Lemma \ref{lm:geo_conv} extends to general convex sets a well known result holding for strictly convex ones, \cite[Theorem A.1.20]{CSbook} (see also \cite[Proposition 1.1]{ALS2021}). The claimed $C^1$-differentiability is independent from the regularity of the boundary that may also have corners or flat points away from segments. On the other hand, simple examples show that $C^2$-regularity fails in presence of corners (Example \ref{ex:convex_sailboat_detailed})  
 or even for smooth 
 strictly convex  sets with flat points on the boundary, like for instance $U=\{(x,y)\in\R^2\ :\ x^4+y^4\le1\}$ (we are indebted to  Prof.\ Lev Lokutsievskiy for this enlightening example). A sufficient condition for $C^k$-regularity of $\sigma_U$ (with $k\ge2$) in the case of a strictly convex set  
 is that it coincides locally with
the epigraph of a function of class $C^k$ with strictly positive definite
Hessian (see \cite[Theorem A.1.21]{CSbook}).  
\end{remark}
}

Recalling that the adjoint state $p(t)$ associated to optimal controls is never zero  (see Lemma \ref{lem_PMP}, assertion (1)),  the previous Lemma \ref{lm:geo_conv} justifies the following definition. 

\begin{definition}\label{def:regimes}
    Let $U$ be a compact convex body of $\R^2$.  
    Let $u$ be an optimal control of $\Pcal_{A,B,U,s}$ and $p$ be a corresponding Pontryagin adjoint state. A non-empty (relatively) open subinterval $I$ of $[0,t_f]$ is called a
    \begin{enumerate}
        \item\label{it:reg_reg} 
     {\em regular regime} if $p(t)\in\R^2\setminus\UNbar$ for every $t\in I$;   \item\label{it:sing_reg}  {\em singular regime} if $p(t)\in \UN$ for every $t\in I$.
    \end{enumerate}   
In case \ref{it:reg_reg}. (respectively, \ref{it:sing_reg}.) we use also to say that a regular (respectively,  singular) regime occurs in the interval $I$.   
\end{definition}

Two open intervals $I$ and $J$ (regimes) are said to be {\em contiguous} if $\overline{I}\cap\overline{J}$ is a singleton. In other terms, two regimes are contiguous if they are disjoint and separated by just one point.  

The case in which $\#(\Scal)<+\infty$, that is $\partial U$ contains only a finite number of maximal open segments, is special because (the union of the family) $\UN$ is closed and the following theorem holds. 

\begin{theorem}\label{thm:splitting}
Let $U$ be a compact convex body of $\R^2$, $u$ be an optimal control of $\Pcal_{A,B,U,s}$ and $p$ be a corresponding Pontryagin adjoint state. 
    If $\#(\Scal)<+\infty$, then the interval   $[0,t_f]$ can be split into a union of at most countably many non-empty subintervals
    whose interior parts  are regular or singular regimes.   
\end{theorem}


\begin{proof} As already remarked,  under our hypothesis the set $\UN$ (as  union of the family $\UN$) is closed. 
If $p(t)\in \UN$ for every $t\in[0,t_f]$ then (by definition) there exists a unique, singular, regime and the theorem holds  true.

Let us then consider the complementary case in which there exists $t_0\in[0,t_f]$ such that $p(t_0)\in\R^2\setminus \UN$ 
and let 
$I_0^{\max}$ be the maximal (i.e., union of all) regular regimes  containing  $t_0$. Let 
$\tb_0:=\sup I_0^{\max}$. 
In other words, 
$\tb_0:=\sup\{t\in[t_0,t_f]\ :\ p(t)\in\R^2\setminus\UN\}$.  If $\tb_0=t_f$ then $(t_0,t_f)$ is a regular regime and  there is nothing else to prove forward in time. Suppose, then, that $\tb_0<t_f$. Since $\R^2\setminus\UN$ is open, then $\varnothing\ne(t_0,\tb_0)$ is a regular regime.
By continuity of $p$ and since $\UN$ is closed, we have $p(\tb_0)\in \UN$. 
Suppose, now, that does not exist any open interval $I_\eps:=(\tb_0,\tb_0+\eps)$ such that $p(t)\in \UN$ for every $t\in I_\eps$ (singular regime). 
We would like to prove that, then, another regular contiguous regime occurs. Indeed, in such case we have that for every $\eps>0$ there exists $\tb_0^\eps\in(\tb_0,\tb_0+\eps)$ such that $p(\tb_0^\eps)\not\in \UN$. Since $p$ is continuous and $\UN$ is closed,  there correspondingly   exists a family of regular regimes  containing $\tb_0^\eps$. Since $\tb_0^\eps\to \tb_0$ as $\eps\to0$ then the union of such regular regimes is a regular regime contiguous to $\tb_0$, that is, of the form $(\tb_0,t_1)$ with $t_1>\tb_0$.

A similar argument works for $\underline{t}_0=\inf I_0^{\max}$ and going backward in time. To complete the proof, it remains only to observe that the  interval $[0,t_f]$ cannot be union of an uncountable family of disjoint intervals with non-empty interior, because any interval of the family contains a different rational number, and $\Q$ is countable.

\end{proof}

\begin{corollary}\label{cor:splitting} Let $\#(\Scal)<+\infty$. 
 If singular regimes do not occur, then the interval   $[0,t_f]$ can be split 
 into a union of at most countably many non-empty subintervals
 whose interior parts  are (contiguous) regular regimes.    
\end{corollary}

\subsection{Characterization of the optimal controls}
In this section, using Pontryagin necessary condition, we characterize the  optimal controls in the two different regimes introduced in Definition \ref{def:regimes}. 

{In {\em regular regimes}, {\color{black} slightly strengthening} the regularity of the support function stated by  Lemma~\ref{lm:geo_conv},  {\color{black} we are allowed} to eliminate the adjoint variable $p$ leading to a further optimality necessary condition in the form of a differential equation that must be satisfied by the control function. Its  coefficients depend on  the state $x$ trough the Jacobian of the drift,  and we shall see that, when the control set $U$ is a ball, this equation results in  a non-parametric form of  the celebrated Zermelo Navigation Equation (ZNE). 

It is in fact a general result, highlighted separately in Lemma~\ref{lem:geoZNE}, that the derivation of the ZNE relies solely on the Hamiltonian system and the Weierstrass condition, and thus depends purely on the geometry of the problem.

In addition to regular regimes, where the ZNE holds and the control  has the same  regularity of the current field {\color{black} (provided that the support function be regular enough)}, singular regimes may also arise. In these cases, the optimal control can be discontinuous, and Pontryagin’s necessary conditions leave it largely undetermined.

It is now convenient to introduce the symplectic notation.    Let $J={\small\begin{pmatrix}0&-1\\[0pt]1&0\end{pmatrix}}$ and, for $a,b\in\R^2$, let us set 
$\omega(a,b):=\langle Ja,b\rangle$
the {\em canonical symplectic bilinear form}. It is well known, and easy to check,  that $\omega(a,b)=0$ if and only if the vectors $a$ and $b$ are linearly dependent.   

\begin{lemma}\label{lem:geoZNE} 
Let $U$ be a non-empty bounded subset of $\R^2$  and let $H(t, x, u, p, p_0) :=p_0+\langle p,u+s(t,x)\rangle$. Let $I\subseteq\R$ be open and non-empty, $s\in C^2(I\times\R^2)$ and suppose that $u\in L^0(I,\R^2)$, $x\in W^{1,1}(I,\R^2)$ and $p\in W^{1,1}(I,\R^2)$ satisfy
$$
\begin{cases}
 \dot x(t)=\frac{\partial H}{\partial p}(t, x(t), u(t), p(t), p_0)\\
 \dot p(t)=-\frac{\partial H}{\partial x}(t, x(t), u(t), p(t), p_0)\\
 u(t)\in\partial\sigma_U(p(t))
\end{cases}
$$
for almost every $t\in I$. 
  If   {$\partial\sigma_U$} is single-valued and  twice differentiable  on {\color{black} on an open neighborhood $P$} of $p(I)$, then $u\in C^2(I,\R^2)$ and the following  {\em non-parametric Zermelo navigation equation} holds 
\begin{equation}
    \label{eq:ZNElemma}
\omega\big(\dot u,\ddot u-\nabla_x s(\cdot,x)\, \dot u\big)=0\quad\mbox{ in }\Omega.
\end{equation}
Moreover, if $s\in C^k(I\times\R^2,\R^2)$ and {\color{black}$\sigma_U\in C^{k+1}(P)$}, then  $u\in C^{k}(I,\R^2)$ ($k\in\N\setminus\{0\}$).
\end{lemma}


\begin{proof} Note that the Hamiltonian coincides exactly with the one introduced at the beginning of Section \ref{sec_Pontryagin}. Consequently, the canonical equations match our state and adjoint equations. 

Under our assumptions the subdifferential is a gradient and the inclusion becomes
\begin{equation}\label{eq:u_in_subdiff_sigma_U}
u(t)=\nabla\sigma_U(p(t)).
\end{equation}
Since  $p\in C^1(I,\R^2)$  (see (c) of Remark~\ref{Rmk: p nonzero}), the regularity of  $\sigma_U$ implies that 
the composition on the right hand  side returns a function  $u\in C^1(I,\R^2)$.  Hence, we can differentiate equation \eqref{eq:u_in_subdiff_sigma_U} obtaining
\begin{equation}
    \label{eq:udot=D2}
\dot u(t)=D^2\sigma_U(p(t))\,\dot p(t).
\end{equation}
On the other hand, by the regularity of $u$ and $s$,  the state equation implies $x\in C^1(I,\R^2)$ (in fact, also $C^2))$. Thus, by the adjoint equation, we have $p\in C^2(I,\R^2)$, which (by \eqref{eq:udot=D2}) implies  $u\in C^2(I,\R^2)$. 

Since $\sigma_U$ is $1$-homogeneous, by Euler's homogeneous function theorem we have {$\inp{\nabla\sigma_U(p)}{p}=\sigma_U(p)$ and, differentiating w.r.t.\ $p$, we get
$D^2\sigma_U(p)\,  p=0$.} By putting all these together,   
we obtain the following important consequence:  
\begin{equation}\label{eq_orthupp_Uconvex}
    \inp{p(t)}{\dot u(t)}=\inp{p(t)}{D^2\sigma_U(p(t))\,\dot p(t)}
    =    0\ \mbox{  for all $t\in I$.} 
    \end{equation}
  Since  $u\in C^2(I,\R^2)$,  we can differentiate also the  equality $\inp{p(t)}{\dot u(t)}=0$ and, using the adjoint equation, we obtain
 \begin{equation}\label{eq_ortho2} 
0=\inp{p(t)}{\ddot u(t)-\nabla_xs(t,x(t))\dot u(t)}
\mbox{  for all $t\in I$.}
\end{equation}
In dimension $n=2$, equations \eqref{eq_orthupp_Uconvex} and   \eqref{eq_ortho2} imply Zermelo navigation equation 
\eqref{eq:ZNElemma}.

To prove the moreover part of the statement 
let us note that, by the state equation, 
if $s\in C^3(I\times\R^2,\R^2)$ then also  
 $x\in C^3(I,\R^2)$.  
 By the adjoint equation then we get $p\in C^3(I,\R^2)$. Under {\color{black}$\sigma_U\in C^4(P)$}, by equation \eqref{eq:udot=D2} we have $u\in C^3(I,\R^2)$ as claimed.  This argument can be iterated to conclude the proof  by induction. 
 
\end{proof}

}

\begin{theorem}
\label{th:char_u_convex-case} Let $U\subset\R^n$ be a compact convex body 
{\color{black}
and 
let  $s\in C^1(\R_+\times\R^n,\R^n)$.}
If {\color{black} $(u,x)$ is an optimal solution} for problem $\Pcal_{A,B,U,s}$, then $u(t)\in\partial U$ for a.e.\ $t\in[0,t_f]$. 
    Moreover, if $n=2$  and {\color{black} $p$ is a Pontryagin adjoint state}, 
    then  
    we have  
    \begin{enumerate}
    \item\label{th:regular}{\em(regular regimes)}  {\color{black}if $I$ is a regular regime then $u\in C(I,\ext(U))$ and $x\in C^1(I,\R^2)$; if, moreover,   $\sigma_U\in C^3(P)$ where $P$ is an open neighborhood of $p(I)$, then 
    $u\in C^2(I,\ext(U))$}   
  and satisfies  the following conditions:
 
         \begin{enumerate}
         \item  {\em (non-parametric) Zermelo navigation equation } 
            \label{it:ZNE1} 
            \begin{equation}
            \label{eq_ZNE1_Uconvex} 
                \omega\big(\dot u,\ddot u-\nabla_x s(\cdot,x)\, \dot u\big)=0{\color{black}\quad\mbox{ in }I};
            \end{equation}
              \item\label{it:ZNE0}  if, moreover, $s=s(x)$ (i.e., $s$ is independent of $t$), then 
        either 
            $\omega(\dot u,{u+s(x)})\equiv0$ {\color{black}in the abnormal case ($p_0=0$)},   
         or $\omega(\dot u,{u+s(x)})$ is never  zero {\color{black}in the normal one ($p_0=-1$)};
         \item if  {\color{black}$s\in C^k(I\times\R^2,\R^2)$ and $\sigma_U\in C^{k+1}(P)$, then  $u\in C^{k}(I,\ext(U))$} ($k\in\N$, $k\ge2$);
    \end{enumerate}
    \item\label{item_charth_sing} {\em(singular regime)} if {\color{black}$I$} is a singular regime then there exists 
    {\color{black}$i\in \Ical$} 
     such that 
     $u(t)\in {\color{black}\overline{S_{i}}}$ for a.a.\ {\color{black}$t\in I$}.
    \end{enumerate}
\end{theorem}

{\color{black}
 \begin{remark}
 The assumption $\sigma_U\in C^3(P)$ provides sufficient regularity to derive Zermelo equation according to Lemma \ref{lem:geoZNE}   (see also Lemma \ref{lm:geo_conv} and Remark \ref{rem:support_nonC2}).
  \end{remark}
}

{
\begin{remark} (a)
In the case $n=2$, decomposing $\nabla_x s = E + W$ into its symmetric and antisymmetric parts, with
\[
E=\tfrac12(\nabla_x s+\nabla_x s^\top),
\qquad
W=\tfrac12(\nabla_x s-\nabla_x s^\top)=\alpha J,
\]
where $\alpha=\tfrac12(\partial_{x_1}s_2-\partial_{x_2}s_1)$, we obtain
\(
\omega(\dot u,\ddot u)
=
\omega(\dot u,E\dot u)
+
\alpha\,|\dot u|^2.
\)
Hence the signed (oriented) curvature of the curve $u(t)$,
\(
\kappa_{\mathrm{or}}(u)
:=
{\omega(\dot u,\ddot u)}/{{|\dot u|^3}}, 
\)
satisfies
\[
\kappa_{\mathrm{or}}(u)
=
\frac{\omega(\dot u,E\dot u)}{{|\dot u|^3}}
+\frac{\alpha(x)}{{|\dot u|}}.
\]
Thus, the curvature of the optimal control  is influenced both by the vorticity
$\alpha=\tfrac12\operatorname{curl}s$ of the current and by its symmetric (strain) component through the term $\omega(\dot u,E\dot u)$.

   (b) In components, equation \eqref{eq_ZNE1_Uconvex}   writes 
    
    \begin{equation}
    \label{eq_ZNE1_component}  
       \ddot u_1\dot u_2-\dot u_1\ddot u_2+\dot u_1^2s_{2,1}(\cdot,x)-\dot u_1\dot u_2\big(s_{1,1}(\cdot,x)-s_{2,2}(\cdot,x)\big)-\dot u_2^2s_{1,2}(\cdot,x)=0,
    \end{equation}
    where  $s_{i,j}:=\frac{\partial s_i}{\partial x_j}$. 
\end{remark}}

\begin{proof}{\em of Theorem \ref{th:char_u_convex-case}.} 
 Under our assumptions, the PMP (Lemma \ref{lem_PMP}) holds and $u$ satisfies, in particular, Weierstrass condition 
 which 
 can also be written in the form (see (a) of Remark~\ref{Rmk: p nonzero})
$u(t)\in\partial\sigma_U(p(t))$ 
    for almost all $t \in [0, t_f]$. By   Lemma \ref{lm:geo_conv}, the subdifferential $\partial\sigma_U$ 
takes values on $\partial U$. 
Then we have $u(t)\in\partial U$ for a.a.\  $t\in[0,t_f]$, as claimed.  

Let us prove {\em1.} In regular regimes, 
  by Lemma \ref{lm:geo_conv} again, the subdifferential $\partial\sigma_U$ 
is single valued, it belongs to $\ext(U)$ {\color{black}and is continuous, as well as  $p$. By Weierstrass condition $u(t)=\partial\sigma_U(p(t))$ it immediately follows that $u$ is continuous. By the state equation and the regularity of $s$ we obtain $x\in C^1(I,\R^2)$ and the first part of {\em(1)} is proved.   

Under the additional  assumption that $\sigma_U$ be
  of class  $C^3$  in a an open neighborhood $P$ of $p(I)$}, Lemma \ref{lem:geoZNE} applies and we get that   {\color{black}$u\in C^2(I,\R^2)$,} Zermelo equation \eqref{eq_ZNE1_Uconvex} holds, and also part {\em1.(c)} turns out to be proven.

Assertion {\em1.(b)} follows by proving that, when $s$ is independent on $t$, only the following two cases can occur:
    \begin{enumerate}
        \item[(i)] $p_0=0$ $\iff$ $\omega(\dot u(t),{u(t)+s(x(t))})=0$ for every $t\in R$, 
        \eqref{it:ZNE0} of the statement; 
    \item[(ii)] $p_0=-1$ $\iff$  $\omega(\dot u(t),{u(t)+s(x(t))})\ne0$ for every $t\in R$, 
    \end{enumerate}
    Indeed, by \eqref{eq_ch}, $p_0=0$ iff $\inp{p}{u+s(x)}=0$ a.e.\ on $(0,t_f)$ iff (by \eqref{eq_orthupp_Uconvex} and the regularity of $u$, $s$ and $x$) $\omega(\dot u,{u+s(x)})\equiv0$ on {\color{black}$I$}, which proves {(i).}
    On the other hand, by the same arguments and by continuity, we have that  $p_0=0$ iff $\exists$ {\color{black}$t_0\in I$} such that $\inp{p(t_0)}{u(t_0)+s(x(t_0))}=0$ iff $\exists$ {\color{black}$t_0\in I$} such that $\omega(\dot u(t_0),u(t_0)+s(x(t_0)))=0$,  which proves {(ii)}\ because $p_0$ is constant and can take only the values $0$ and $-1$.  

   Let us prove assertion {\em2}. If $p(t)\in \cup_{i\in \Ical} N_i$ in an interval {\color{black}$I$}, by continuity and the fact that $p$ is never zero (see Lemma \ref{lem_PMP}), there exists 
    {\color{black}$i\in \Ical$}  such that 
    $p(t)\in N_{i_0}$ for every  {\color{black}$t\in I$.} Therefore, by \eqref{eq:u_in_subdiff_sigma_U} and {\em(2)} of Lemma \ref{lm:geo_conv}, we have 
     {\color{black}$u(t)\in \overline{S_{i}}$} for a.e.\  {\color{black}$t\in I$,} which proves assertion {\em(2)} of the statement.

The theorem is, thus, completely proven. 
\end{proof}

\begin{remark}{If a singular regime {\color{black}$I$} (i.e., scenario {\em\ref{item_charth_sing}})   of Theorem \ref{th:char_u_convex-case}) occurs,  then in the time interval {\color{black}$I$} the control $u$ may be discontinuous. Nevertheless, discontinuities may appear also between  contiguous regular regimes, even if singular ones do not occur (see Example \ref{ex:convex_sailboat_detailed}).   Such discontinuities would correspond to  tacking maneuvers, very usual in sailing navigation.}          
\end{remark}

\subsection{Existence of singular regimes: necessary condition}\label{subsec:SingularRegimes}

From Definition \ref{def:regimes}, it trivially follows that singular regimes cannot occur when $U$ is strictly convex (i.e., $\UN=\varnothing$). 
In this subsection we present another useful necessary condition for the occurrence of  a singular regime in the non-trivial case of a non strictly convex control set $U$.  
This condition involves the first-order structure of the current field (specifically, the Jacobian matrix {$\nabla_x s$} and its eigenvectors)  along the optimal trajectory. Given a $2\times2$ square matrix $D$, let us denote by $E_\lambda$ the eigenspace  corresponding to an eigenvalue $\lambda$ in the spectrum $\sigma(D)$ of $D$ (with the convention $E_\lambda(D)=\{0\}$ if $\lambda$ is not purely real)
and 
$E(D):=\cup_{\lambda\in\sigma(D)}E_\lambda(D)$.

\begin{theorem}[necessary condition for the existence of a singular regime]   \label{th:sing_reg_NC} Let $U$ be a compact convex body, {$s\in C^1(\R_+\times\R^2,\R^2)$} and $u$ be an optimal control for problem $\Pcal_{A,B,U,s}$. 
If a singular regime occurs in a time interval $I_0$, then 
$$
E(\nabla_x s(t,x(t))^\top)\cap \UN\ne\{0\}\quad \forall\,t\in I_0.
$$
In other words, there exists $i_0\in \Ical$ such that, for every $t\in I_0$, the exterior normal vector  $p_{i_0}$ to the segment $S_{i_0}$ is an eigenvector of $D(t):={\nabla_x} s(t,x(t))^\top$. 
\end{theorem}


\begin{proof} 
If this is the case, by Definition \ref{def:regimes} of singular regime, we have that $p(t)\in\UN$ for every $t\in I_0$.
Since $p\in C^1$ (see Remark \ref{Rmk: p nonzero}(b)), the normal cones share only the origin and $p(t)$ is never zero, then  
 there exists $i_0\in \Ical$, 
$p_{i_0}\in N_{i_0} \setminus\{0\}$ with $u_{i_0}\in S_{i_0}$,  and  $\mu\in C^1(I_0,(0,\infty))$  such that  $p(t)=\mu(t) p_{i_0}$. 
Then, the adjoint system \eqref{eq:AdjointEquation}  gives
\begin{equation}\label{eq:curr_lambda}
\dot \mu(t)\,p_{i_0}=-\mu(t)D(t) p_{i_0}.
\end{equation}
Since $p_{i_0}\ne0$ and $p(t)\ne0$, we are into non-trivial solutions only and the thesis follows by the subsequent lemma.
\end{proof}

\begin{lemma}\label{prop:nc_tacking}
Equation \eqref{eq:curr_lambda} has  non-trivial solutions on $I_0$ 
if and only if $p_{i_0}$ is an eigenvector  of $D(t)$ corresponding to a continuous selection $\lambda(t)$ of real eigenvalues,  for every $t\in I_0$; in such case,  the general solution is given by 
\begin{equation}\label{eq:gen_sol_lambda}
\mu(t)=c\e^{-\int_{t_0}^t\lambda(\xi)\,d\xi},\ t\in I_0,\  c\in\R.
\end{equation}
\end{lemma}


\begin{proof}
Let us prove sufficiency: necessity relies on simply substituting \eqref{eq:gen_sol_lambda} into \eqref{eq:curr_lambda}. Let us denote by $q_{i_0}(t):= D(t) p_{i_0}$, which is continuous by continuity of $D$. Since $p_{i_0}\ne(0,0)$, then it is not restrictive to assume that $p_{i_01}\ne0$ and we have that the  first equation in \eqref{eq:curr_lambda} turns out to be equivalent to 
$\dot \mu(t)=-\frac{q_{i_01}(t)}{p_{i_01}}\mu(t)$,  
that admits the general solution
\begin{equation}\label{eq:gensol_pq}
\mu(t)=c\e^{-\int_{t_0}^t(q_{i_01}(\xi)/{p_{i_01}})d\xi},\ t\in I_0,\ c\in\R.
\end{equation}
Substituting in the second equation of \eqref{eq:curr_lambda}, we obtain 
$\det(p_{i_0},q_{i_0}(t))=0$ 
for every $t\in I_0$, which implies that $p_{i_0}$ and $q_{i_0}(t)$ are linearly dependent, i.e.,  there exists $(\alpha(t),\beta(t))\ne(0,0)$ such that $\alpha(t) p_{i_0}+\beta(t) q_{i_0}(t)=0$. On the other hand we must have $\beta(t)\ne0$ for every $t\in I_0$, since otherwise $p_{i_0}=0$ which is excluded. Then, dividing by $\beta$ we conclude that there exists a continuous function $\lambda(t)\in\R$ such that $q_{i_0}(t)=\lambda(t)p_{i_0}$, that is $D(t) p_{i_0}=\lambda(t)p_{i_0}$. Since $p_{i_0}\ne0$, then it is an eigenvector corresponding to the eigenvalue $\lambda(t)$, as claimed. Moreover, expression  \eqref{eq:gensol_pq} becomes \eqref{eq:gen_sol_lambda} and the lemma is completely proven.
\end{proof}

\begin{remark}
 (a) The necessary condition of Theorem \ref{th:sing_reg_NC} cannot be satisfied (and singular regimes can never occur) whenever the matrix $D={\nabla_x} s^\top$ is constant (e.g.,  $s(t,x)=Dx+s_0(t)$) and has no real eigenvalues (and regardless of the choice of $U$).      This happens, for instance, when  {$s(t,x)=\delta(-x_2,x_1)+s_0(t)$,}
 for suitable   $\delta\in\R$ and $s_0\in C^1$.

\label{rem:NC_D=0}
    (b) The necessary condition of Theorem \ref{th:sing_reg_NC} is always satisfied when $\nabla_x s\equiv0$  (case of a current independent of $x$ but possibly depending on $t$).
Nevertheless,  for a wide choice of current fields and control sets $U$ the necessary condition is not satisfied even if real eigenvalues exist (see Example \ref{ex:convex_sailboat_detailed})  and, thus,  singular regimes are not allowed.
Conversely, in some circumstances a singular regime not only can, but must, occur (see Remark  \ref{rem:Cubar_geo} and Example~\ref{ex:singular_must_occur}). 

(c) The case in which singular regimes can be excluded (e.g., when the necessary condition is not satisfied) are particularly important when the set of admissible velocities $V$ is non-convex. Indeed,  in such case one is lead to consider the convexified problem $P_{\co(V)}$ which, in the absence of singular regimes, solves also the non-convex problem (see the forthcoming  Section \ref{sec:nonconvex}).    

(d) It should be emphasized that the singular phenomena described in this paper are not driven by Lie bracket effects in the classical sense of low-dimensional single-input systems, as described in~\cite{Sussmann1987_1,KS1989,BC2003,
BCW2023}. Indeed, writing the dynamics as
\[
\dot x = s(t,x) + u_1 e_1 + u_2 e_2,
\]
with $(u_1,u_2)=u\in U$, the controlled directions already span $\mathbb{R}^2$, so that the control vector fields form a full-rank distribution whenever $0 \in \operatorname{int}(U)$. Hence the Lie algebra does not generate additional directions, and bracket-induced curvature effects of reachable sets do not play a structural role here. The only drift–control interaction relevant to our analysis is of first order and is encoded in the Jacobian $\nabla_xs$, which enters the adjoint dynamics and the necessary condition for the persistence of singular regimes.
\end{remark}

\subsection{Normal cones and invariant sets} By Weierstrass necessary condition $p(t)\in N_U(u(t))$ for a.e.\ $t\in(0,t_f)$,  the optimal control $u(t)$ is characterized by the evolution of the adjoint state $p(t)$. 
On the other hand, $p(t)$ solves a linear differential equation
(the adjoint equation \eqref{eq:AdjointEquation}).
The study of invariant sets of the adjoint system, in connection with the structure of the normal cones to $U$,  is helpful in characterizing the optimal controls, as stated by the following two lemmas. Actually, these results will play a crucial role in the synthesis of optimal controls for a   linear current (see Example \ref{ex:convex_sailboat_detailed}).

\begin{lemma}\label{lem:NUcovering}
Given a compact convex body  $U\subset\R^2$, the family of normal cones 
$\{N_U(u)\}_{u\in\ext(U)}$
covers the plane. Moreover, the intersection of any pair of distinct cones of the family is either $\{0\}$  or a ray normal to a segment of the boundary. 
\end{lemma}
\begin{proof} The first part of the statement is well known also in dimension greater than 2 (see, e.g., \cite[Equation (2.2.5)]{Schneider1993}).   
Let us prove the part regarding the intersection of the normal cones, in dimension $2$.   

Let $x,y \in \ext(U)$, $x \neq y$, and assume $N_U(x) \cap N_U(y) \neq \{0\}$. 
Let $p \neq 0$ belong to this intersection.
Then, by definition of  normal cone to a convex set (see \eqref{def_normal_cone}),
\[
\langle p, u-x \rangle \le 0 \quad\text{and}\quad
\langle p, u-y \rangle \le 0
\quad\forall u \in U.
\]
Equivalently, in terms of the support function $\sigma_U$ of $U$, 
\(
\langle p, x \rangle = \langle p, y \rangle = \sigma_U(p),
\)
so both $x$ and $y$ belong to the subdifferential 
$\partial\sigma_U(p) := \{ u \in U : \langle p, u \rangle = \sigma_U(p) \}$. 
Since $U \subset \mathbb{R}^2$ is compact and convex, $\partial\sigma_U(p)$ is either a point or a line segment on the boundary  (see Lemma~\ref{lm:geo_conv}). 
Since $x \neq y$, it must be a nondegenerate segment with endpoints $x$ and $y$, that is, $\partial\sigma_U(p)=[x,y]$. As already observed (Proposition \ref{def:norm_cone_segment}), for every $s\in(x,y)$, 
$N_U(s)$ lies on the normal line to the segment. Moreover, we have 
$p\in N_U(s)$. Indeed, by definition of normal cone, this amounts to 
$\langle p,u-s\rangle\le0$ for every $u\in U$, which is true observing that there exists $\lambda\in(0,1)$ such that $s=\lambda x+(1-\lambda)y$ and 
$$
\langle p,u-s\rangle=
\langle p,\lambda u+(1-\lambda)u-[\lambda x+(1-\lambda)y]\rangle
=\lambda \langle p,u-x\rangle+(1-\lambda)\langle p,u-y\rangle\le0.
$$
Therefore,
\(
N_U(x) \cap N_U(y)\subseteq N_U(s).
\)

To prove the converse inclusion we take $p\in N_U(s)$ and observe that, for every $u\in U$
$$
\langle p,u-x\rangle=\langle p,u-s\rangle+\langle p,s-x\rangle\le0
$$
because $p\in N_U(s)$ and $N_U(s)$ is normal to $x-s$. Then, we have $p\in N_U(x)$. Analogously, we can prove that $p\in N_U(y)$ and the lemma is completely proven.   
\end{proof}

\begin{lemma}\label{lem:invariance}
Let $U$ be a compact convex body of $\R^2$ and $s\in C^1(\R_+\times\R^2,\R^2)$. 
Suppose that $(u,x)$ is an optimal solution of $\Pcal_{A,B,U,s}$ and that $p$ is a corresponding adjoint state. Suppose that there exists a cone $Q$ such that $p(t)\in Q$ for every $t$ in an open interval $I\subseteq[0,t_f]$ and that the following property holds
\begin{equation}\label{eq:ass_UQ}
u\in U\setminus Q\ \implies\ N_U(u)\cap Q=\{0\}.
\end{equation}
Then $u(t)\in Q\cap\partial U
$ for a.e.\  $t\in I$.
If, moreover, $I$ is a regular regime, then $u(t)\in Q\cap \ext(U) $ 
for a.e.\  $t\in I$.
\end{lemma}
\begin{proof}
Since $p(t)\in Q$ and (by   nondegeneration and Weierstrass  conditions) $0\ne p(t)\in N_U(u(t))$ for a.e.\ $t\in(0,t_f)$, then 
$p(t)\in N_U(u(t))\cap Q$, and hence $N_U(u(t))\cap Q\ne\{0\}$,  for a.e.\ $t\in I$. This, by assumption \eqref{eq:ass_UQ}, implies $u(t)\in Q$ for a.e.\ $t\in I$. The conclusion follows by Theorem~\ref{th:char_u_convex-case} (see also Lemma \ref{lm:geo_conv}).
\end{proof}

\section{Parametrization of the boundary and classical ZNE}\label{sec_paraboundary}

Theorem \ref{th:char_u_convex-case} states, among other things, that the optimal control lies on $\partial U$ and the navigation equation is obtained without imposing any parametrization of the boundary. In this sense, in regular regimes, Zermelo’s equation \eqref{eq_ZNE1_Uconvex} is ``non-parametric''. However, any regular parametrization of (the relevant part of) the boundary allows one to rewrite it, at least locally, in a ``parametric'' form, thereby highlighting certain consequences arising from the geometry of $U$, including the derivation of the “classical” Zermelo navigation equation holding when $U$ is a ball.

Let $\gamma:\R\to\partial U$ and $\alpha:[0,t_f]\to\R$ be $C^2$ functions such that  $u(t)=\gamma(\alpha(t))$ for every $t$ in an (relatively) open interval $I\subseteq[0,t_f]$.  Heuristically, the function $\gamma$ takes into account the geometry of the boundary, while $\alpha$ is typically an angle that can have a physical meaning like the heading angle, or a geometrical meaning as, for instance, the arclength parameter (or even both ones).

Computing the derivatives of $u$ and substituting 
in the Zermelo equation \eqref{eq_ZNE1_Uconvex} (or~\eqref{eq_ZNE1_component}), for every $t\in I$ 
we  have $\dot\alpha(t)=0$ or 
\begin{equation}
\label{eq_ZNE_gamma}
\dot\alpha(t)d_\kappa(\alpha(t)) = \det\big(\gamma'(\alpha(t))|\nabla_x s(t,x(t))\gamma'(\alpha(t)\big),
\end{equation}
where $d_k:=\gamma_2''\gamma_1'-\gamma_1''\gamma_2'$ is the determinant that appears in the numerator of the expression for the curvature of $\gamma$. {If, moreover,  $s$ is independent of $t$, then 
        either 
            $\dot\alpha(t)\det\big({\gamma'}(\alpha(t))|\gamma(\alpha(t))+s(x(t))\big)
            \equiv0$,   
         or it is never equal to zero (normal case).}

The   {\em parametric Zermelo navigation equation \eqref{eq_ZNE_gamma}}  characterizes the optimal control $u(t)=\gamma(\alpha(t))$ 
whenever one can exclude that $\dot\alpha=0$ such as, for instance, {in the {\em normal case} of a $t$-independent current mentioned above.}

\begin{example}[the classical ZNE]
\label{ex_ZNEclassic}{When $U$ is a ball of radius $V>0$, its boundary admits the parametric arclength  representation $\gamma_1(\theta)=V\cos\theta$, $\gamma_2(\theta)=V\sin\theta$, where $\theta$ (playing the role of the parameter $\alpha$ considered above) is the angle with respect to the positive direction of the abscissa. We have    $d_\kappa(\theta)=1$ and equation \eqref{eq_ZNE_gamma} turns  out to be
\begin{equation*}
\dot\theta(t)=s_{2,1}(t,x(t))\sin^2\theta(t)+\big(s_{1,1}(t,x(t))-s_{2,2}(t,x(t))\big)\sin\theta(t)\cos\theta(t)-s_{1,2}(t,x(t))\cos^2\theta(t),\label{eq_ZNEclassic}
\end{equation*}
that is the {\em classical Zermelo navigation equation}   \cite{ZermeloII} (see also \cite{Mania1937}). 
}
\end{example}

\begin{example}[polar coordinates with   $V=V(\theta)$]\label{ex_ZNEestended}{A more general case, compared to that of Example~\ref{ex_ZNEclassic}, can be handled by allowing $V$ to depend on the angle $\theta$ in the expressions of $\gamma$. As before, the curve $\theta\mapsto \gamma(\theta)$ represents a parametrization of the boundary of $U$ which now is no longer constrained to be a sphere. 
    Let us assume that $V$ be regular enough, namely   $V\in C^2$ and regular in the sense of polar curves, that is $V(\theta)^2+V'(\theta)^2>0$ for every $\theta$. 
With the same notation introduced above, we have 
$$
        d_\kappa(\theta)=V(\theta)^2+2V'(\theta)^2-V(\theta)V''(\theta).
    $$
    As before, we assume  $s\in C^2$.
    Computing  the derivatives of $u$ and substituting in the parametric  Zermelo equation \eqref{eq_ZNE_gamma} we
    have
    \begin{equation}
         \label{eq_ZNEgen}
        \begin{aligned}
    \dot\theta d_\kappa(\theta)=&\,
        s_{2,1}(\cdot,x)\big(V'(\theta)\cos\theta-V(\theta)\sin\theta\big)^2
            -s_{1,2}(\cdot,x)\big(V'(\theta)\sin\theta+V(\theta)\cos\theta\big)^2\\
            &+\big(s_{1,1}(\cdot,x)-s_{2,2}(\cdot,x)\big)\big(V'(\theta)\sin\theta+V(\theta)\cos\theta\big)\big(V'(\theta)\cos\theta-V(\theta)\sin\theta\big).
        \end{aligned}
         \end{equation}
    For $V(\theta)=V$ constant,  equation \eqref{eq_ZNEgen} reduces to the classical Zermelo   equation of  Example \ref{ex_ZNEclassic}. 
}
\end{example}

\section{
Navigation equations for a strictly convex set
}
\label{sec_U_strictly_convex}

The particular case of a strictly convex control set $U$ is characterized by a single global regular regime and, thus,  Theorem \ref{th:char_u_convex-case} admits the following corollary.

\begin{corollary}
\label{cor:strictly_convex} 
Let $U$ be a compact and strictly-convex body of $\R^n$ {\color{black} and 
$s\in C^1(\R_+\times\R^n,\R^n)$.  If $(u,x)$ if an optimal solution for problem $\Pcal_{A,B,U,s}$
then $u\in C((0,t_f),\partial(U))$ and $x\in C^1((0,t_f),\R^n)$. 
Moreover, if $n=2$, 
$s\in C^k(\R_+\times\R^2,\R^2)$  for some $k\ge2$, $p$ be a Pontryagin adjoint state and $\sigma_U\in C^{k+1}(P\setminus\{0\})$
in an open neighborhood $P$ of $p(0,t_f)$, 
  then $u\in C^{k}((0,t_f),\partial U)$} and satisfies on $(0,t_f)$ the following conditions 
    \begin{enumerate}
          \item[(a)] {\em (Zermelo navigation equation)} 
            \begin{equation}
            \label{eq_ZNE1} 
                \omega(\dot u,{\ddot u-\nabla_x s(\cdot,x)\, \dot u})=0.
            \end{equation}
             \item[(b)] if, moreover, $s=s(x)$ (i.e., $s$ is independent of $t$), then 
        either 
            $\omega(\dot u,{u+s(x)})\equiv0$ {\color{black}in the abnormal case ($p_0=0$)},   
         or $\omega(\dot u,{u+s(x)})$ is never  zero {\color{black}in the normal one ($p_0=-1$)}.
    \end{enumerate}
\end{corollary}

\begin{remark}\label{rem:ZNEstrconvex} (a) It is clear that  $u$ is not unique, in general. Indeed, by taking, for example, $U$ to be a ball centered in the origin, a point $B\ne0$ and  $A$ a half sphere centered in $B$, we have that from any  starting point in $A$ it takes the same time to reach $B$.  

{\color{black}
(b) As already observed (Remark \ref{rem:support_nonC2}), the condition $\sigma_U\in C^{k+1}(P\setminus\{0\})$ is satisfied by strictly-convex and compact sets $U$ that coincide locally with
the epigraph of a function of class $C^k$ with strictly positive definite
hessian. Hence, for such sets Zermelo equation globally holds. 

(c)  Corollary  \ref{cor:strictly_convex} recovers} the celebrated Zermelo's navigation equation in the case in which $U$ is a closed ball (see Example \ref{ex_ZNEclassic}). 
In fact, in his problem proposed and solved in 1931  (\cite{ZermeloII})  Zermelo considered the particular case in which the vessel can move at maximum speed $V$ in any direction and at any point.
In solving his problem, Zermelo could not have used Pontryagin's principle, which dates from 1956.

(d) In the setting of Example \ref{ex_ZNEestended}   
     the strict convexity of $U$ would correspond
    to strict positivity of   the curvature 
    $$
        \kappa(\theta)=\frac{\big|V(\theta)^2+2V'(\theta)^2-V(\theta)V''(\theta)\big|}{\big(V(\theta)^2+V'(\theta)^2\big)^{3/2}}
    $$
    of the polar curve $V=V(\theta)$, which implies  
    $    d_\kappa(\theta)=V(\theta)^2+2V'(\theta)^2-V(\theta)V''(\theta)\ne0.$ 
    In such case, equation \eqref{eq_ZNEgen} admits the normal form $\dot\theta(t)=f(t,x(t),\theta(t))$.
\end{remark}

\section{The case of an affine current}\label{sec:examples}

 In this section we present the particular case of planar navigation  in presence of a position-affine current $s(t,x)=s_0(t)+D^\top\!(t)x$ with $s_0\in C^1(\R_+,\R^2)$ and $D\in C^1(\R_+,\R^{2\times 2})$. Moreover, for simplicity, and because it  is the most interesting scenario in applications, we restrict ourselves to a (convex and compact) control set $U$ having a finite number $m\in\N$ of  segments on the boundary (that is,  $\Ical=\{1,\dots,m\}$ with  the notation introduced in Section~\ref{ass:m_segments}). This includes the case of a strictly convex set $U$ ($m=0$).

When the current is an affine function of the position, 
the geometric necessary condition for the existence of a singular regime, 
Theorem \ref{th:sing_reg_NC},  is particularly beneficial because ${\nabla_x} s=D^\top$  does not involve the state trajectory $x(t)$. 
Moreover  (due to the constancy of $U$), when  the eigenspaces of
$D$ are constants, if the necessary condition is satisfied on an interval $I$ then it  holds  
on the whole interval $[0,t_f]$. In fact, the following stronger property
can be proved.

\begin{theorem}\label{th:affine_regimes}
In the case of a position-affine current of the form $s(t,x)=D(t)x+s_0(t)$ with $D(t)=\delta(t)D_0$ for $D_0\in\R^{2\times 2}$  constant and $\delta\in C^1(\R_+,\R\setminus\{0\})$, if $u$ is a time-optimal control for problem $\Pcal_{A,B,U,s}$ then 
we have:
\begin{enumerate}
    \item if there exists a singular regime, then it is global;
    \item if $\delta$ is constant, then 
the following alternative holds: 
\begin{enumerate}
    \item there exists a global singular regime
    \item
the interval $[0,t_f]$ is union of (the closure of)
 a finite number of regular regimes; they reduce to, at most, two of them if   
    the eigenvalues of $D_0$ (equivalently, of $D$) are real.
    \end{enumerate}
\end{enumerate}
\end{theorem}


\begin{proof}   Let us prove {\em(1)}. First of all we note that $E(D(t))=E(D_0)$ for every $t$ and the necessary condition of Theorem \ref{th:sing_reg_NC} reads 
$E(D_0)\cap\Ncal\ne\{0\}$.  
Hence, the occurrence of a singular regime on an interval $I$ implies the existence of a real eigenvalue $\lambda$ of $D_0$ and an index $i\in\Ical$ such that $p(t) \in N_i \cap E_{\lambda}(D_0)$ for all $t \in I$. The claim follows by observing that $N_i \cap E_\lambda(D_0)$ is an (forward and backward) invariant  cone for the adjoint system  $p' = -D(t) p$. Indeed, starting from $p(t_0)\in N_i \cap E_\lambda(D_0)$  for $t_0\in I$, it is immediately seen that  $p(t)= e^{-\lambda\int_{t_0}
^t\delta(\xi)\,d\xi}p(t_0)\in  N_i \cap E_\lambda(D_0)$ for every $t\in[0,t_f]$.

Let us prove {\em(2)}. Suppose that a singular regime does not occur. By Theorem \ref{cor:splitting} the interval $[0,t_f]$ splits into an (at most countable)  union of regular regimes. By Theorem \ref{th:char_u_convex-case}, each separation point between them corresponds to an instant $t$ in which  $p(t)\in N_i$ for some $i\in\Ical$. Such instants $t$ must be necessarily isolated, because singular regimes are excluded. Thus, the claim can be proven by showing that $p(t)$ can intersect the normal rays $N_i$ only a finite number of times (at most one if the eigenvalues are real). 

On the other hand, it is well known (see \cite[Chapter 1] {perko_differential_2009}) that, 
on a finite time-horizon, any non-zero trajectory in the phase diagram of a $2\times2$ linear system  $y'=Ay$ can intercept a ray exiting from the origin and not aligned with an eigenvector of $A$ only a finite number of times. Such intersection points reduce to  at most  one if the eigenvalues of $A$ are real. 

Thus, the claim follows by taking $A=-D$, in the case in which the ray $N_i$ is not aligned with an eigenvector. 
Otherwise, since the 
corresponding eigenvalue would be necessarily real, if $p(t)$  intersects $N_i$ then (arguing as in the proof of part {\em(1)}) it must belong to $N_i$ for every $t\in[0,t_f]$. This would be a singular regime, excluded by our assumptions. Hence, when $N_i$ is aligned with an eigenvector, the adjoint state  $p(t)$ cannot intersects $N_i$ and the theorem is completely proved.  
\end{proof}

The case of a $t$-independent affine current can be considered as a first approximation of a more complex one.  This case can be reduced to that of a linear  current $s(x)=D^\top x$ on the (constant) control set $$U_0:=U+s_0$$ which, on the other hand, preserves the same properties (compactness, convexity, non empty interior) and structure of the boundary (finite number of boundary segments)  of the set $U$. 
For this reason,  from now on we split the study in the two subcases of a constant and a linear current.

 \subsection{The case of a constant current}\label{subsec:constant_current}

 Suppose that the current is  constant, i.e.,  $s(t,x)=s_0\in\R^2$ for every $x$ and that $u$ is an optimal control.  Then $\nabla_x s=0$ and the adjoint system gives $p(t)=p$  constant. Thus, in such case there exists a global  regime, regular or singular (according to Theorem~\ref{th:affine_regimes}).   Moreover, we can set the current to be $0$ by working with the set $U_0$ in place of $U$. Having this in mind,  in the following, we continue to denote the control set by $U$.     

The main result of this subsection is the following theorem that completely characterizes  
the optimal control when the starting and target regions, $A$ and $B$,  reduce to single points. 
In the statement we will make use of the following notations. 
Given two distinct points $P,Q\in\R^2$, let us denote by 
$$
[P,Q\rangle:=\{P+\lambda(Q-P)\ :\ \lambda\ge0\}
$$
the {\em ray originating from 
$P$ and passing through $Q$.} With this notation,  and $O=(0,0)$, we have 
$$
[O,Q-P\rangle=\{\lambda(Q-P)\ :\ \lambda\ge0\},
$$
that is the positive closed cone generated by the vector $Q-P$. 

\begin{theorem}\label{th:char_ABpoints_s=const} Suppose that $A$ and $B$ are singletons and $U$ be a compact convex body of $\R^2$ with $m$ segments in its boundary. 
If an optimal control for problem $\Pcal_{A,B,U,s_0}$ exists, then the (unique) constant control 
$$
\ub=\argmax\{|u|\ :\ u\in[0,B-A\rangle\cap\partial U\}
$$
is optimal. If $\ub\in S_i$ for some $i\in\{1,...,m\}$ and the support line of $S_i$ does not contain the origin 
then every measurable selection $u(t)\in \overline{S_i}$ such that $x^u(t)=B$ for some $t\geq 0$  is optimal as well. Otherwise, $\ub$ is the unique optimal control.
\end{theorem}

In the case of a strictly convex control set $U$, the theorem above admits the following corollary.

\begin{corollary}\label{Cor_ScostUsc} Suppose that $A$ and $B$ are singletons and that $U$ be a compact and strictly convex body of $\R^2$ such that $0\in U$.  
If an optimal control exists, then it is the constant 
$\ub=[0,B-A\rangle\cap\partial U$.
\end{corollary}

\begin{remark}\label{rem:Cubar_geo}{To  identify the vector $\ub$ by a geometric construction, it could be useful to   observe that it is given by   
$$
\ub=C-A
$$
where $C$ is the point at maximal distance from the $A$ among those of the set 
     $$ [A,B\rangle\cap\partial(U+A).$$
    In practice, one has to 
   \begin{itemize}
       \item    
   ``center" the control set $U$  on $A$ by putting the origin of the reference system  on $A$,
   \item 
   trace the ray joining $A$ and $B$ to find the  boundary point $C$ at maximal distance from $A$, 
   \item recover $\ub$  by joining $A$ and $C$.  
   \end{itemize}
   \begin{figure}[h!] 
   
        \begin{minipage}{0.3\textwidth}


        \includegraphics[width=1.1\textwidth]{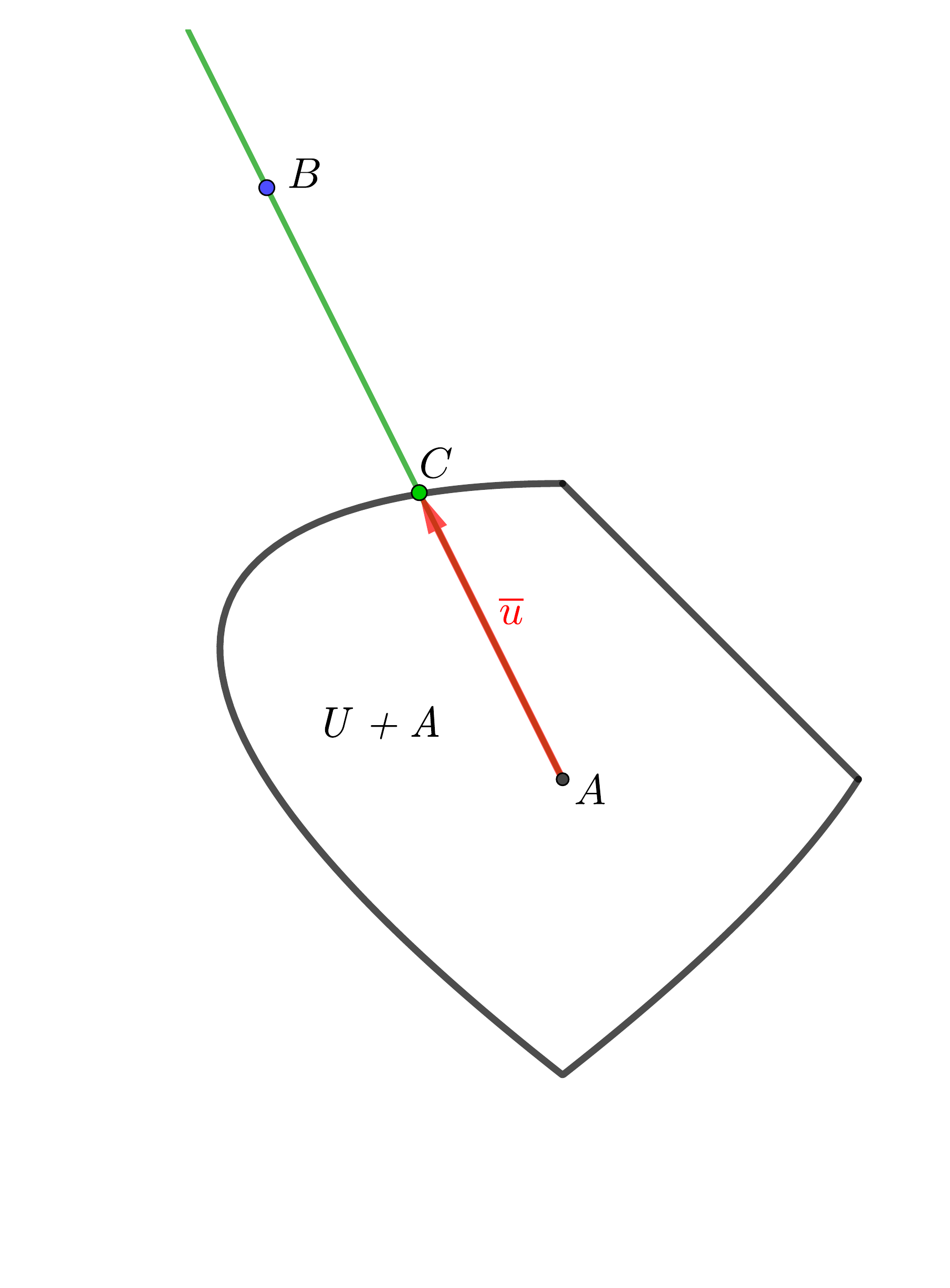}

\vspace{-9ex}
\begin{center}
      \quad\quad\quad  (a)
        \end{center}
        \end{minipage}
        \hspace{4ex}
        \begin{minipage}{0.3\textwidth} 
        \begin{center}\includegraphics[width=1.0\textwidth]{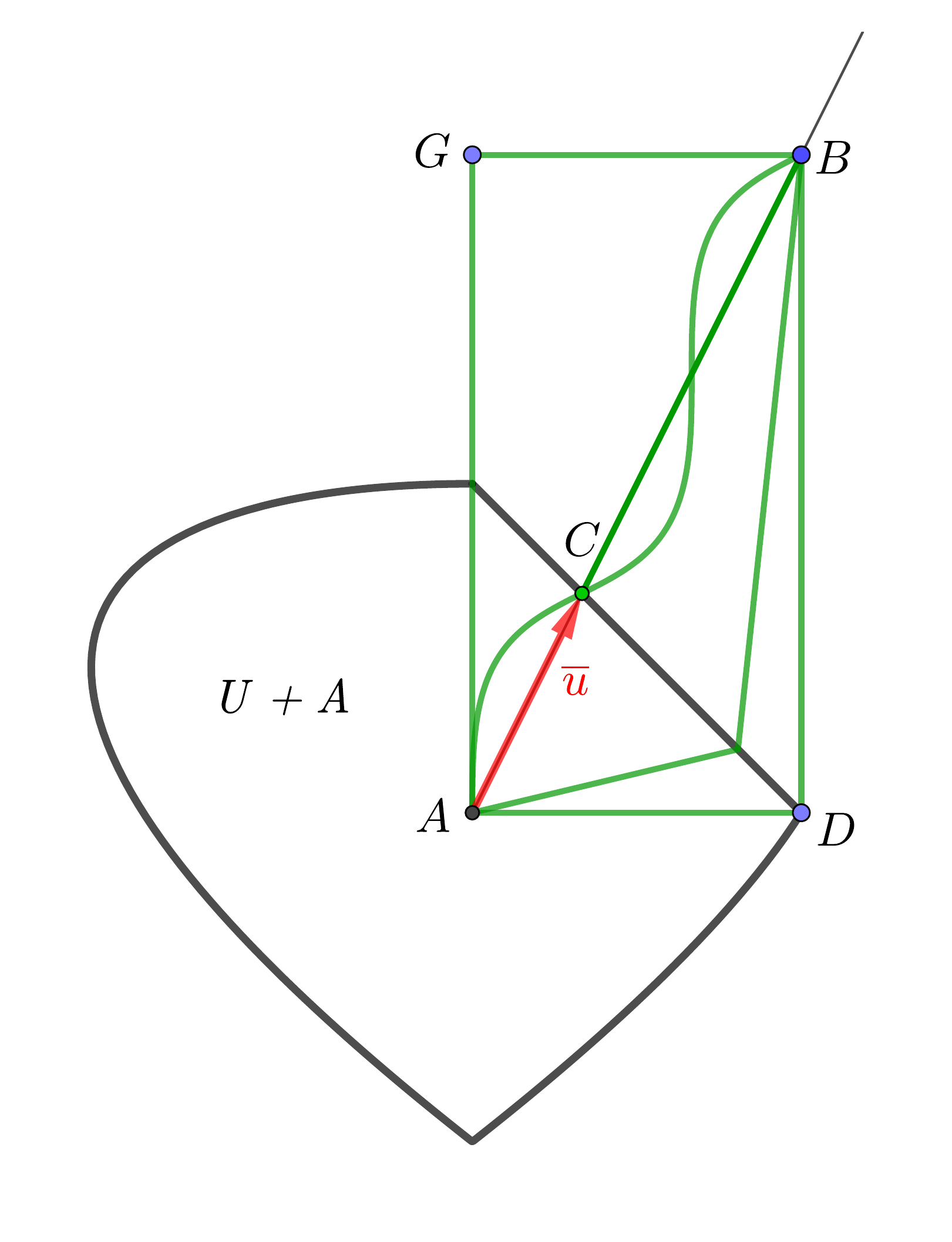}

        \vspace{-1ex}
        
        (b)
        
        \end{center}\end{minipage}\hspace{-4ex}
        \begin{minipage}{0.3\textwidth} 
        
        \begin{center}\includegraphics[width=1.2\textwidth]{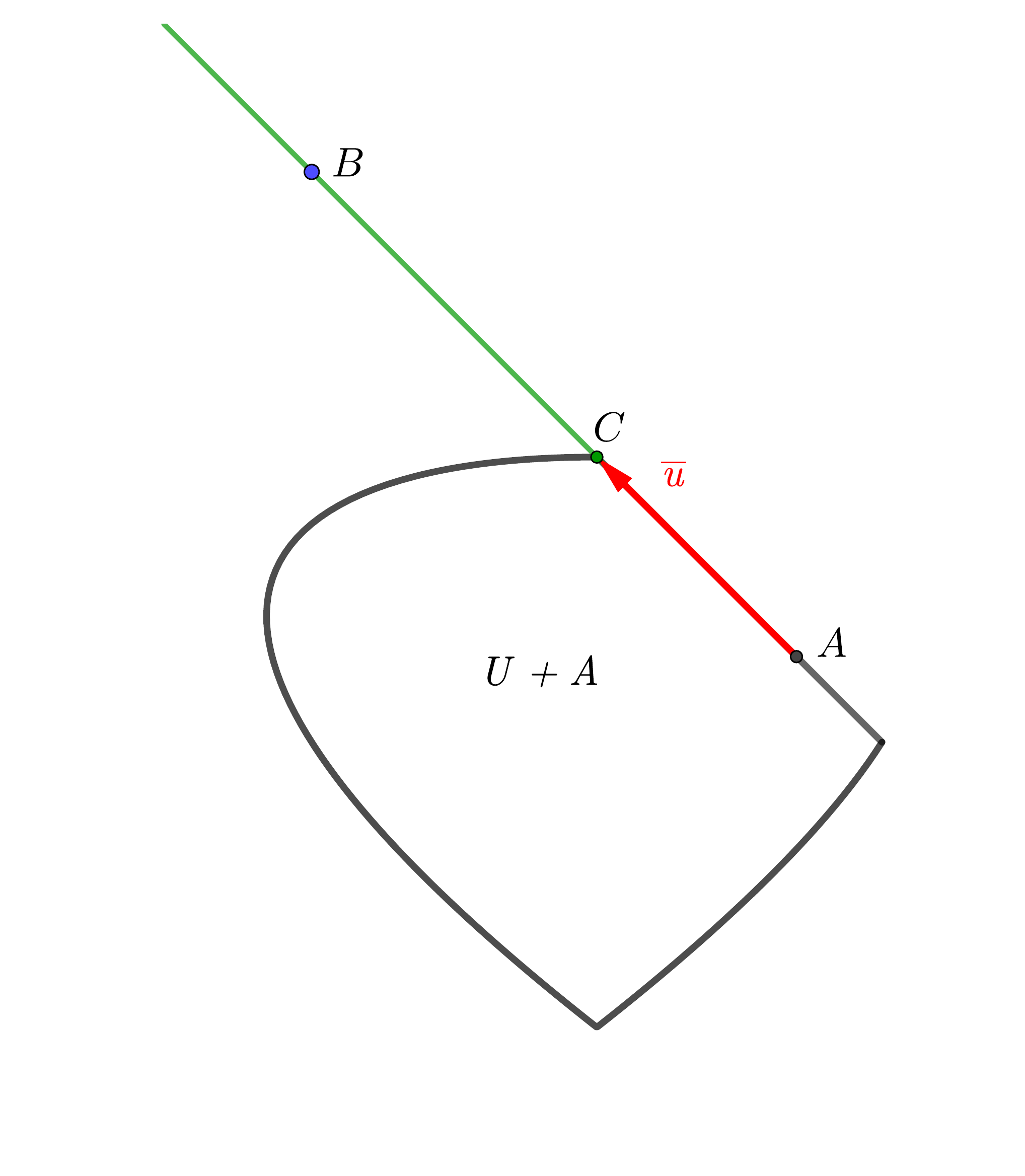}

\vspace{-4ex}

     \quad   (c)
        \end{center}
        \end{minipage}
            \caption{
             Same set $U$ (not strictly convex) but different end points $A$ and $B$,  with the corresponding constant control $\ub$ and  point $C$ (see Remark  \ref{rem:Cubar_geo}), and optimal trajectories (in green).}
        \label{fig:bordi}
    \end{figure}

   \noindent Remember that this  procedure must be applied to $U_0=U+s_0$ if  $s_0\ne0$.
   The cases shown in Figure~\ref{fig:bordi} consider the same set \(U\), but different starting and target points \(A\) and \(B\). For each case, the corresponding control \(\ub\), as given by Corollary~\ref{Cor_ScostUsc}, and the point \(C\) from the geometric construction are shown.
  In Figure~\ref{fig:bordi}(a) and~\ref{fig:bordi}(c) the constant vector $\ub$ is the unique optimal control, but in  the latter the ray joining $A$ and $B$ intersects 
   the boundary of $U$ along a segment: the point $C$ is the most distant from $A$. In Figure 
\ref{fig:bordi}(b) all measurable selections $u(t)\in \overline{S_i}$ are optimal and produce a trajectory (in red) inside the rectangle AGBD.  
To go into a numerical example corresponding to Figure 
\ref{fig:bordi}(b), let $A=(0,0)$, $B=(1,2)$, $D=(1,0)$. 
If we go from $A$ to $D$ with control $(1,0)$ and, after, from $D$ to $B$ with control $(0,1)$ then we take the same time ($t_f=3$) as going straight with control $\ub=(1/3,2/3)$. 
 Analogously, also along the trajectories $AGB$, $ADB$ and $AEB$ 
 it takes always the same time  to reach $B$. As said, this holds also for every measurable selection $u(t)\in S$ such that $x^u(0)=A$ and $x^u(t_f)=B$ as, for instance, 
for the curved trajectory in 
Figure \ref{fig:bordi}(b) 
which  corresponds to   $u(t)=\big(\frac{2}{3}\sin^2(\pi t),1-\frac{2}{3}\sin^2(\pi t)\big) $. 
}
\end{remark}

We postpone the proof of Theorem \ref{th:char_ABpoints_s=const} after  the following lemma in which   $A$ and $B$ are general ending regions (as in Section \ref{sec_Problem_setting}).

\begin{lemma}\label{lem:const_curr_Uconvex} Suppose that $s=0$ and $U$ is a compact convex body of $\R^2$, with $m$ segments in its boundary  and $0\in U$. 
    If $(u,x)$ is an optimal solution of problem $\Pcal_{A,B,U,s}$, 
    then $u$ is characterized by a global regime, regular or singular. 
\begin{enumerate}
\item\label{it:reg_s=const}    
If the regime is regular then $u$ is the (unique) constant  vector
   $$ \ub=\argmax\{|u|\ :\ u\in[0,x(t_f)-x(0)\rangle\cap\partial U\}.$$
\item\label{it:sing_cost_curr} If the regime is 
singular, then there exists $i\in\{1,...,m\}=\Ical$ such that $u(t)\in\overline{S_i}$ for a.e.\  $t\in(0,t_f)$. Moreover, if $\overline{S_i}$ is convex combination of two linearly  independent extreme points $u^i$ and $v^i$ of $U$ (satisfied if the support line of  $S_i$  does not contain the origin), 
then for  every measurable selection $v(t)\in\overline{S_i}$ we have
\begin{equation}\label{eq:tf_const_curr}
t_f=\inp{a}{x^v(t_f)-x^v(0)},
\end{equation}
where $a=\frac{(u^i-v^i)^\perp}{\langle u^{i\perp},v^i\rangle}$ and $w^\perp:=(-w_2,w_1)$.     
In particular, the final time
depends only on the initial and final points of the trajectory.
\end{enumerate}
\end{lemma}


\begin{proof} The first part of the statement is a trivial consequence of the fact that (as observed at the beginning of the subsection) the conjugate state $p$ is constant.

The Weierstrass condition \ref{eq_wc} of Lemma \ref{lem_PMP} writes 
            $u(t)\in \partial\sigma_U(p)=\argmax_{u\in U}\,\inp{p}{u}$ 
and, according to Lemma \ref{lm:geo_conv}, we have 
\begin{equation}\label{eq:weier_const_curr}
u(t)\in\begin{cases}
\{\ub\}&\mbox{if }p\not\in \cup_{i=1}^m N_i \mbox{ (regular regime)}\\
\overline{S_i}&\mbox{if }\exists\,i\in\{1,\dots,m\}\,:\, p\in N_i \mbox{ (singular regime)}
\end{cases}
\end{equation}
for a suitable constant $\ub\in U$ and for almost every $t\in(0,t_f)$. 

In particular, we have that in regular regimes the optimal control $u$ is constant. 
 Let us prove that $\ub$ takes the expression {\em(\ref{it:reg_s=const})}.  Since  $u=:\ub$ is constant and $s=0$, by the state equations 
we have 
$\ub=\frac{x(t_f)-x(0)}{t_f}$. This means that $\ub$ belongs to the half-line $[0,x(t_f)-x(0)\rangle$.  The final claimed expression  follows by the fact that (by Theorem~\ref{th:char_u_convex-case}) $\ub$ is a boundary point of $U$ 
and that, to minimize time, we have to maximize  the modulus of the  velocity by selecting the intersection point  at maximal distance from the origin. This point exists and  is unique because, by closure and convexity of $U$, the  set $[0,x(t_f)-x(0)\rangle\cap\partial U$ is a closed segment possibly reducing to a single point.

The first part of {\em(\ref{it:sing_cost_curr})} follows by Weierstrass condition (see \eqref{eq:weier_const_curr}). 
It remains only to prove the ``moreover part" of {\em(\ref{it:sing_cost_curr})}. Since $u^i$ and $v^i$ are linearly independent, then $a=\frac{(u^i-v^i)^\perp}{\langle u^{i\perp}, v^i\rangle}$ is well defined (the denominator is not $0$) and  a direct computation shows that
$\overline{S_i}\subset\{(u_1,u_2)\in\R^2\ :\ \inp{a}{u}=1\}$.     
Since $v(t)$ belongs to the segment and the latter is contained in the line $\inp{a}{u}=1$ then 
$\inp{a}{v(t)}=1$ for every $t\in[0,t_f]$. 
 On the other hand, by the state equations we have 
$v={\dot x^v}$ and, therefore,
$\inp{a}{{\dot x^v}(t)}=1$ for every $t\in[0,t_f]$
By integrating on $[0,t_f]$ we get
$\inp{a}{{x^v}(t_f)-x^v(0)}=t_f$, 
and \eqref{eq:tf_const_curr} follows. 
\end{proof}

\begin{proof}{\em of Theorem \ref{th:char_ABpoints_s=const}.}   By Lemma \ref{lem:const_curr_Uconvex} only two alternative regimes may occur. If the regime is regular the claim follows by the first assertion of the same lemma.

If the regime is singular and the support line of $S_i$ does not contain the origin, then 
the second assertion of Lemma \ref{lem:const_curr_Uconvex}  implies that   every trajectory from $A$ and $B$ generated by a measurable selection $u(t)\in \overline{S_i}$  takes the same reaching time 
$t_f=\inp{a}{B-A}$.
Moreover, the constant control
$\ub={(B-A)}/{t_f}$ 
belongs to $\overline{S_i}$. Indeed, only points $B$ such that $B-A$ falls inside the cone generated by points in $\overline{S_i}$ (i.e., $K_i:=\{\lambda u\ :\ u\in \overline{S_i},\ \lambda>0\}$) can be reached by controls that are measurable selections of  $\overline{S_i}$. Hence, $B$ must belong to $K_i$ and the half-line $[0,B-A\rangle$ intersects  $\overline{S_i}$, implying $\ub\in\overline{S_i}$.  Thus, the constant control  $\ub$
turns out to be among the optimal ones. 

It remains only the case in which the regime is singular and the support line of $S_i$  contains the origin. In such case, by the first part of assertion {\em(2)} of Lemma \ref{lem:const_curr_Uconvex} we have $u(t)\in\overline{S_i}$. On the other hand, due to the position of the origin, all such vectors $u(t)$ have the same direction (parallel to the segment joining $A$ and $B$).  The optimum is the unique among those of them that point towards $B$ and maximizes the distance from the origin, that is $\ub$.   

Then, we can claim that, independently of the regime,  the constant control $\ub$ 
 is always optimal , and that it is unique except, at most, when it belongs to $\overline{S_i}$. To completely prove the theorem it remains only to show that the uniqueness holds also when $\ub$ is an endpoint of the segment $S_i$, that is,  $\ub\in\overline{S_i}\setminus S_i=\{x_i,y_i\}$. But in such case, among all measurable selections of points of  $\overline{S_i}$, only the constant one $u(t)=\ub$ can reach $B$, and the theorem is completely proven. 
 \end{proof}

\

\subsection{The case of a linear current}

In this subsection we consider the case in which  the current $s$ is 
a linear function of the position, that is $s(x)=D^\top x$ where $D\in\R^{2\times 2}\setminus\{(0,0)\}$.

Regarding the occurrence of singular regimes, we observe that the measure-theoretic \lq\lq typical case\rq\rq~ is one in which such regimes do not occur. This is because, in view of Theorem \ref{th:sing_reg_NC}, the set of all constant $\R^{2\times 2}$ matrices possessing an eigenvector aligned with an at most countable set of directions (specifically, those normal to the boundary segments)  has a four-dimensional Lebesgue measure  zero.

Without claiming to be exhaustive, we are going to present some key examples starting from the typical cases in which singular regimes are not allowed. When needed, we assume that the 
modulus of $D^\top$ be
small enough so that there exists a neighborhood  $X$ of $A$ and $B$ in which the weak current assumption (WC) is satisfied. 

\subsubsection{$U$ strictly convex} 
Let us start by the simplest case in which $U$ is strictly convex and, hence, singular regimes (by definition) are never allowed.  

{\color{black} Suppose, moreover that  $U$ coincides locally with
the epigraph of a function of class $C^3$ with strictly positive definite
hessian. As already observed in Remarks  \ref{rem:support_nonC2} and   \ref{rem:ZNEstrconvex}, this implies that the support function $\sigma_U\in C^3(\R^2\setminus\{0\})$ and Corollary   \ref{cor:strictly_convex} applies.

Since $\nabla_x s=D^\top$ (constant),   any optimal control $u$ must be $C^2$-smooth,} $u\in\partial U$ and satisfies Zermelo equation 
    \begin{equation}
    \label{eq_ZNE1_sl} 
       \omega\big(\dot u,\ddot u-D^\top \dot u\big)=0.
    \end{equation}
Let us, first, consider the {\em isotropic case} $D^\top=\eps I$, with $\eps\in\R$ small enough. Let us note that, as for a constant current, also this case is abnormal (i.e., $p_0=0$). Indeed, in such  particular case,   equation~\eqref{eq_ZNE1_sl} takes the simpler form
$\omega(\dot u,{\ddot u})=0$. This means that the curvature of $u$ is $0$,  that is $\dot u=0$. By the strict convexity of $U$ and the fact that the optimal control belongs to the boundary, we conclude that $u$ must be constant.  It is worth noting that, by the way, the trajectory corresponding to this optimal control  will be not rectilinear  if $\eps\ne0$. 

Let us consider, now, the  {\em anisotropic case}  $d_{11}=\varepsilon$, $d_{12}=d_{21}=d_{22}=0$. 
In such case the non-parametric navigation equation
becomes (in components) 
    \begin{equation}
    \label{eq_ZNE1_d11} 
        \ddot u_1\dot u_2-\dot u_1\ddot u_2-\varepsilon \dot u_1\dot u_2=0.
    \end{equation}
By performing  a standard polar parametrization with $V=V(\theta)$ as in Example \ref{ex_ZNEestended}, 
the Zermelo navigation equation corresponding to \eqref{eq_ZNE1_d11} in polar coordinates would be (see also Remark \ref{rem:ZNEstrconvex}(d)) 
\begin{equation}
\label{eq_ZNE_aniso}
    \dot\theta=\frac{\varepsilon}{d_\kappa(\theta)}\big(V'(\theta)\sin\theta+V(\theta)\cos\theta\big)\big(V'(\theta)\cos\theta-V(\theta)\sin\theta\big).
\end{equation}
The following example shows that this is possibly not the best parametrization. 

\begin{example}[shifted ellipsis]\label{ex_shift_ellip}{Let us consider the elliptic control set $$U=\{u\in\R^2\ :\ \frac{(u_1-c_1)^2}{a_1^2}+\frac{(u_2-c_2)^2}{a_2^2}\le1\}$$ 
    centered in $c=(c_1,c_2)$, with $a_1,a_2>0$, and $\frac{c_1^2}{a_1^2}+\frac{c_2^2}{a_2^2}<1$  so that $0\in\,\inter{U}$. For simplicity, let us suppose that $A=(0,0)$, that is the starting region is put in the origin. Then, under the linear current field $s(x)=(-\eps x_1,0)^\top$, with $0<\eps<1$ the  position $x(t)$ must satisfy the Cauchy problem 
\begin{equation}
    \label{eq:CP_shifted_ell}
    \begin{cases}
    \dot x_1(t)=u_1(t)-\eps x_1(t)\\
    \dot x_2(t)=u_2(t)\\
    x_1(0)=x_2(0)=0.
    \end{cases}
    \end{equation}
     Since the current becomes stronger  as $x_1$ increases, only a subset $\Rcal$ of the plane can be reached in finite time by starting from the origin.  In fact, by estimating $u_1$ and the corresponding component  $x_1$ of the solution of the Cauchy problem,  
     one easily finds that 
\begin{equation}\label{eq:reachable_set_R}
\Rcal=\Big(\frac{1}{\eps}\Big(c_1-\frac{a_1}{a_2}\sqrt{a_2^2-c_2^2}\Big),\frac{1}{\eps}\Big(c_1+\frac{a_1}{a_2}\sqrt{a_2^2-c_2^2}\Big)\Big)\times \R.
\end{equation}
As a consequence, a solution of the minimum time problem exists if and only if $B\cap \Rcal\ne\varnothing$. For simplicity, let us suppose, from now on, that the target region be a singleton $B=(b_1,b_2)\in \Rcal$.

    The particular geometry of the domain suggests  to parametrize the boundary $\partial U$ with the angle \(\alpha(t)\) in the following way
    \[
        u_1(t)=a_1\cos\alpha(t)+c_1,\qquad u_2(t)=a_2\sin\alpha(t)+c_2.
    \]
    This {\em elliptic parametrization} is, clearly, different from the circular one of Example \ref{ex_ZNEestended}.

    By computing derivatives and substituting in {\em(a)} and 
    {\em(b)} of Corollary   \ref{cor:strictly_convex}, we obtain that 
    \begin{itemize}
     \item[(a)] for any $t\in[0,t_f]$, $\alpha'(t)=0$ or
        \begin{equation}
            \dis\alpha'(t)=-\varepsilon\sin(\alpha(t))\cos(\alpha(t)).\label{eq_ZNElce_ls_ci}
        \end{equation}
        \item[(b)] {either $\alpha'(t)\big(a_1a_2+a_2c_1\cos\alpha(t)+a_1c_2\sin\alpha(t)
            -\eps a_2  x_1(t)\cos\alpha(t)\big)\equiv0$, or it is $\ne0$ for every $t\in(0,t_f)$
     }
    \end{itemize}
    It is worth noting that \eqref{eq_ZNElce_ls_ci} looks much simpler than \eqref{eq_ZNE_aniso}: the two equations play the same role, but are obtained with different parametrizations $\alpha$ and $\theta$. In fact, the two angles have not the same physical/geometric  interpretation: $\theta$ physically represents the heading  angle and can be obtained from $\alpha\in(0,\pi/2)$ (which has only a geometrical meaning) by the relation  
    $$
    \theta=\arctan(\frac{a_2}{a_1}\tan\alpha),
    $$
and similar relations holds for the other values of $\alpha\in[0,2\pi)$.
By straightforward  estimates, the reader can easily check that,  for any point $B$ belonging to the reachable set $R$,   
{the factor inside the round brackets in} (b) never vanishes. 
 Thus, {\em the  optimal controls must be constant or satisfy \eqref{eq_ZNElce_ls_ci}.} 
    The latter is a separable differential equation and  admits the general solution 
    $$
        \alpha(t)=\arctan(C\e^{-\eps t})
    $$
    depending on the constant $C\in\R$. 
     The corresponding family of velocities $u(t)$ (written in rational form) is
     \begin{equation}
    \label{eq_opt_rat_ci_dec}
    u(t)=\frac{1}{\sqrt{1+C^2\e^{-2\eps t}}}\Big(a_1,\;{a_2C\e^{-\eps t}}\Big)+(c_1,c_2),\qquad C\in\R.
    \end{equation}
    Summarizing, we have that the optimal controls are either of this form, or constants. 

     Since the state equations are linear, the Cauchy problem \eqref{eq:CP_shifted_ell} 
    can be explicitly solved. 
    By substituting the family of controls $u=(u_1,u_2)$  computed above into such solutions, we get 
    \begin{equation}
    \label{eq:constant_control_case_ci_dec}
        (x_1(t),x_2(t))=\left(\frac{u_1}{\varepsilon }(1-e^{-\varepsilon t}) \;,\; u_2t\right), \qquad t\ge0,
    \end{equation}
    in the case of constant controls $u_1,u_2\in\R$, and
    \begin{equation}    
    \label{eq_non_constant_opt_ci_ed}
        \begin{split}
            x_1(t) &= \frac{a_1}{\varepsilon}\Big(\sqrt{1+C^{2}e^{-2\varepsilon t}}-e^{-\varepsilon t}\sqrt{1+C^{2}}\Big)+c_1(1-\e^{-\eps t}),\\
            x_2(t) 
            &= \frac{a_2}{\varepsilon}\log\frac{C+\sqrt{1+C^2}}{C e^{-\varepsilon t}+\sqrt{1+C^2e^{-2\varepsilon t}}}+c_2t
        \end{split}
    \end{equation}
    with $C\in\R$, for controls of the form \eqref{eq_opt_rat_ci_dec}.

 {The problem can be solved, for a chosen target point $B$ in the reachable set $\Rcal$ (see \eqref{eq:reachable_set_R}), by  explicitly finding the trajectories  connecting $A$ and $B$ and computing the final time. To make an explicit numerical example, suppose that $B=(1,1)$ 
 and choose  $\varepsilon=1/2$, $a_1=1$, $a_2=2$, $c_1=0$ and $c_2=-1$. This is of some interest in practical navigation, because it models a case in which a circular set of velocities is deformed into an ellipsis with center displaced from the origin due to the effect of waves as in Figure \ref{fig:waves} (see \cite[Figure 8]{sake}). Straightforward calculations, that we omit to be concise, show that the unique solution with a constant control is given by  
 \begin{equation}
    \label{eq:constcontrl_ci}
        u_1=\frac{\eps}{1-\e^{-t_f/2}},\quad u_2=\frac{1}{t_f},
    \end{equation}
 with an estimated final time 
        $t_f\approx  2.4407\pm 10^{-4}$. 
By contrast, there is a unique non-constant solution of the form \eqref{eq_non_constant_opt_ci_ed} with 
$C\approx1.8847\pm 10^{-4}$ 
and $t_f\approx2.3600\pm 10^{-4}$. 
 }

     \begin{figure}[h!] 
        \centering
        \includegraphics[trim={0pt 500pt 0pt 0pt},clip,width=0.35\textwidth]{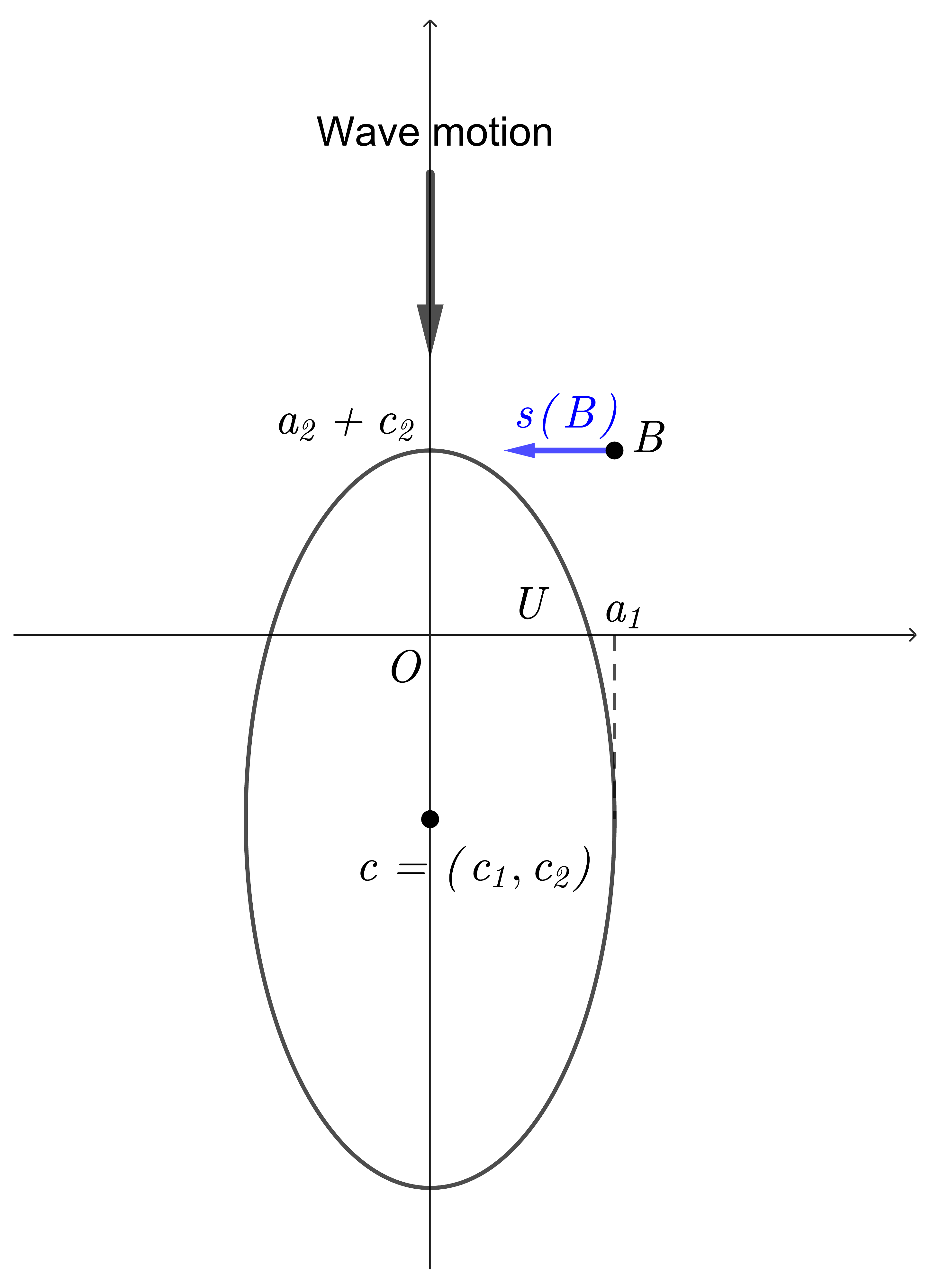}
        \vspace{2ex}
        \caption{The set of velocities $U$ deformed by the action of waves}
        \label{fig:waves}
    \end{figure}

\noindent Clearly, in the latter case, the final time is smaller compared to that required by the constant solution and, hence, the non-constant one  corresponds to  the minimum-time solution.  In this  example, the unique optimal control is given by \eqref{eq_opt_rat_ci_dec} with $C\approx1.8847\pm 10^{-4}$. 
It is worth noting that, also in the general case with $c_1=0$,  the optimal control satisfies a simple feedback control law that can be obtained by solving the first equation \eqref{eq_non_constant_opt_ci_ed}  with respect to $\e^{-\eps t}$, which gives
    \begin{equation*}
        \e^{-\eps t}=\sqrt{1+\eps^2C^2a_1^{-2}x_1^2}-\eps x_1a_1^{-1}\sqrt{1+C^2}:=h(x_1),
    \end{equation*}
and substituting into the expression of the optimal control  \eqref{eq_opt_rat_ci_dec} to get
   \begin{equation}
    \label{eq:feedbackCL}
        u(x_1)=\frac{1}{\sqrt{1+C^2h^2(x_1)}}\Big(a_1,\;a_2Ch(x_1)\Big)+(0,c_2).
    \end{equation}

    \noindent  Figure~\ref{fig:PenUltima} shows the optimal trajectory (in green) and the suboptimal one (in red)  corresponding to the constant control \eqref{eq:constcontrl_ci}, while the dashed red line is the segment  joining $A$ and $B$. In the same picture are also displayed (in blue) the vectors $s$ (current) and $u$ in different points of the trajectories.
    \begin{figure}[h!] 
        \centering
        \includegraphics[width=0.45\textwidth]{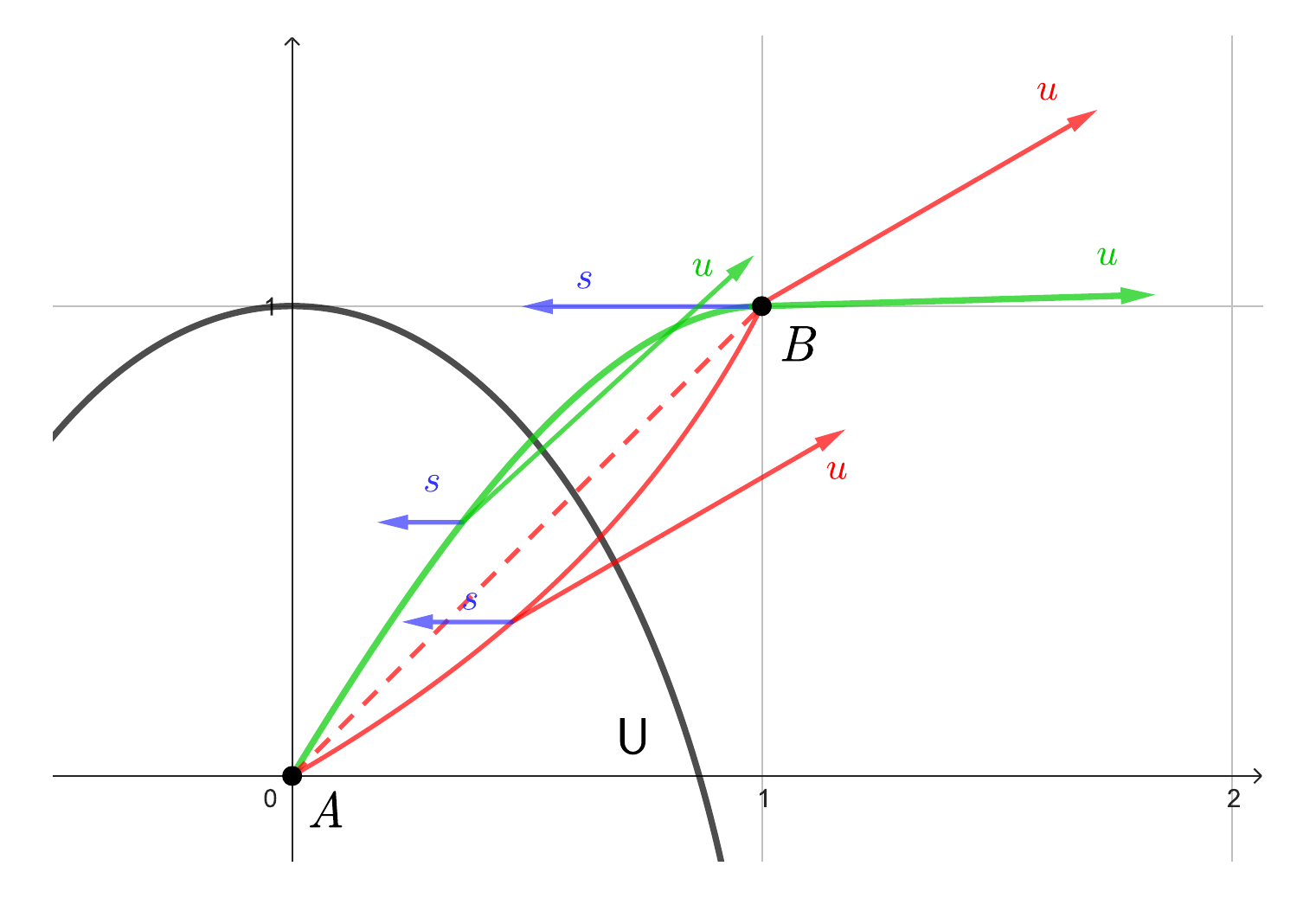}
        \caption{The optimal trajectory (in green) and the (suboptimal) one  generated by the constant control~\eqref{eq:constcontrl_ci} (in red).}
        \label{fig:PenUltima}
    \end{figure}
    Let us conclude by noting that, in practical navigation, also the suboptimal solution with $u$ constant is interesting, because it is easy to follow with a compass on conformal  charts.
}
\end{example}


{
\begin{remark}\label{rem:BCW23comparing} {It is worth noting that, Example \ref{ex_shift_ellip} falls into the Euclidean framework of \cite{BCW2023} with (using the notation therein) $F_0=(-\eps x+c_1,c_2)$, {\color{black}$F_1=a_1e_1$ and $F_2=a_2e_2$.} The determinants introduced in \cite[Proposition 3.9]{BCW2023} turn out to be
$$
D=a_1a_2,\quad D'=-\eps a_1a_2\sin\alpha\cos\alpha,\quad D''=a_1a_2+a_2c_1\cos\alpha+a_1c_2\sin\alpha
            -\eps a_2  x_1\cos\alpha.
$$
By parameterizing the control as in Section \ref{sec_paraboundary} with  $\gamma(\alpha)=(a_1\sin\alpha,a_2\cos\alpha)$, and using the notation of that section,  we have
$D=d_\kappa$, the Zermelo equation is
$D\dot\alpha(t)=D'(\alpha(t))$  and the additional (normality/abnormality) condition writes: either 
$\dot\alpha(t)D''(\alpha(t))\equiv0$ or it is always $\ne0$ in $(0,t_f)$, according to Proposition 3.9 of  \cite{BCW2023}. 
} In the setting of Example \ref{ex_shift_ellip} we have always $D''>0$ (for $A=(0,0)$ and any $B$) and, hence, the optimal trajectory lies always on a  {\em hyperbolic geodesic} in the sense of  \cite[Proposition 3.9]{BCW2023}).
\end{remark}

\subsubsection{A case study in the general convex case}

In this subsection we present an example in which the control set can be regarded as the convex hull of a set that could, in theory, represent the set of admissible velocities for a sailboat.

\begin{example}[convexified  sailboat]\label{ex:convex_sailboat_detailed}{Let us consider a linear current $s(x)= D^\top\! x$, with 
    $$
    \nabla s=D^\top:=\delta\left(\begin{matrix}
    1&0\\
    0&-1
    \end{matrix}\right),\quad \delta>0, 
    $$  
    and a
set $U=B(0,1)\cap\{(u_1,u_2)\in\R^2\ :\ -\frac{5}{4}\le u_1+u_2\le1\}$ like in Figure \ref{fig:U_vela_convex_2segmenti}.
\begin{figure}[h!] 
    \begin{center}
            \includegraphics[trim={0pt 0pt 0pt 0pt},clip,width=0.6
            \textwidth]
            {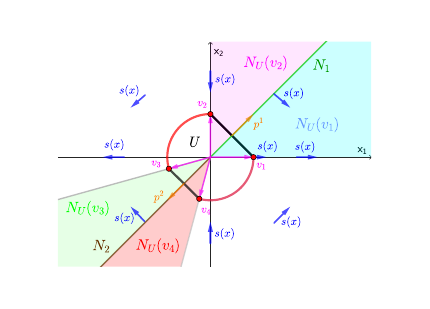}
               \end{center}
            
            \caption{The set $U$ and its normal cones. The extreme points of $U$ are displayed in red.
            }
        \label{fig:U_vela_convex_2segmenti}
    \end{figure}
 The boundary $\partial U$ contains two maximal open segments $S_1$ and $S_2$ with normal cones $N_1$ and $N_2$ generated by the vectors $p^1=(1,1)$ and $p^2=(-1,-1)$, respectively. Since they are not eigenvectors of $D$ (which are all proportional to $(0,1)$ or $(1,0)$ only),  the necessary condition of Theorem~\ref{th:sing_reg_NC} is not satisfied and {\em singular regimes cannot occur.}

    If $u(t)$ is an optimal control,    by   Theorem \ref{th:affine_regimes}, 
    the interval $[0,t_f]$ 
    can be split into the union of at most two contiguous regular regimes
    because the eigenvalues of $D$ are real. 
     According to  Weierstrass condition,  
$p(t)\in N_U(u(t))$ for a.e.\ $t\in(0,t_f)$.
 To characterize $u(t)$ by means of this cones it is enough to consider only the extreme points of $U$, since the corresponding family of normal cones covers the plane by Lemma \ref{lem:NUcovering}. 
We clearly have 
$$\ext(U)=\partial U\setminus({S_1}\cup {S_2}),$$
displayed in red in Figure~\ref{fig:U_vela_convex_2segmenti}.
This set {\color{black}is} composed by two circular arcs including the endpoints $v_1=(1,0)$, $v_2=(0,1)$, $v_3$ and $v_4$. The normal cones in these points are displayed in Figure~\ref{fig:U_vela_convex_2segmenti}, while
outside them they are simply the normal rays to the boundary. Thus, according to Weierstrass condition, we have
\begin{equation}\label{eq:uvi}
    u(t)=\begin{cases}
    v_i&\mbox{if }p(t)\in N_U(v_i),\ i\in\{1,2,3,4\},\\ 
    \frac{p(t)}{|p(t)|}&\mbox{otherwise},
    \end{cases}
\end{equation}
and $N_U(v_1)=\{(x_1,x_2)\in\R^2\ :\  |x_2|\le x_1\}$,  $N_U(v_2)=\{(x_1,x_2)\in\R^2\ :\ x_2\ge|x_1|\}$,  
$N_U(v_3)=\cone(v_3,p^2)$,  $N_U(v_4)=\cone(p^2,v_4)$,
where $\cone(\cdot,\cdot)$ denotes the {\em positive hull} generated by the two vectors, that is, the set of all positive combinations generated by them (see \cite[Section 1.1]{Schneider1993}). 
Note that equation \eqref{eq:uvi} defines $u(t)$ (almost everywhere) in an univocal way. Indeed, as Lemma \ref{lem:NUcovering} states, the normal cones corresponding to extreme points intersects only along 
the normals $N_1$ and $N_2$ to the segments on the boundary, which is exactly the region in which $p(t)$ can stay only in the (possible)  transition instant  between two contiguous regular regimes (otherwise a singular regime would occur, contradicting the fact that such regimes have been previously excluded).

Summarizing, any optimal strategy must have a structure that combines the  controls of the set $\ext(U)$  depending on the evolution of the adjoint state $p(t)$. Such evolution is driven by the adjoint system \eqref{eq:AdjointEquation}
that, in our case,  
admits the general solution
\begin{equation}
    \label{eq:gs_adj_lincur}
p_1(t)=c_1\e^{-\delta t},\quad p_2(t)=c_2\e^{\delta t}
\end{equation}
where $p(0)=(c_1,c_2)\in\R\setminus\{(0,0)\}$,  
since $p\ne0$ (see Lemma \ref{lem_PMP}). 
A relevant fact about the adjoint system for the case under consideration,
as one can immediately see by the general solution \eqref{eq:gs_adj_lincur}, 
is that 
\begin{equation*}
\mbox{{\em every quadrant is an invariant set for $p(t)$}.}
\end{equation*} 
The set in which the evolution will be actually confined  is determined by $p(0)$ or $p(t_f)$, which must be normal to the ending regions $A$ and $B$. On the other hand, if  both regions reduce to single points, no information are given and the evolution of $p(t)$  might, potentially, take place in each one of the 4 invariant sets. Once having determined in which quadrant $Q_i$ ($i\in\{1,2,3,4\}$) the evolution takes place, according to Lemma \ref{lem:invariance} whose assumption \eqref{eq:ass_UQ} is satisfied, the optimal strategy $u(t)$ can select (for a.e.\ $t$)  only the extreme controls belonging to $Q_i$, that is (see Figure   \ref{fig:U_vela_convex_2segmenti})  
$$
\ext(U)\cap Q_i=
\begin{cases}
\{v_1,v_2\}&\mbox{if }i=1,\\
\mbox{the arc }\partial U\cap Q_2 &\mbox{if }i=2,\\
\mbox{two disconnected small arcs}&\mbox{if }i=3,\\
\mbox{the arc }\partial U\cap Q_4 &\mbox{if }i=4.
\end{cases}
$$
\begin{figure}[h!] 
        \begin{minipage}{0.48\textwidth}
    \begin{center}
            \includegraphics[width=0.9\textwidth]{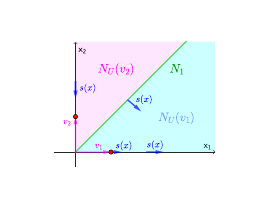}
\vspace{-4ex}

            (Q1)
            
               \end{center}
               \end{minipage}
                    \begin{minipage}{0.48\textwidth}
    \begin{center}
            \includegraphics[width=0.8\textwidth]{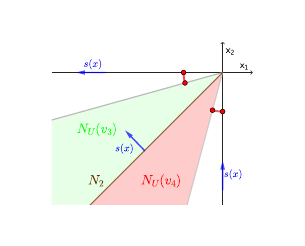}
\vspace{-2ex}

            (Q3)
            
               \end{center}
               \end{minipage} 



            

            \caption{In red, the sets $\ext(U)\cap Q_1$ and $\ext(U)\cap Q_3$}
        \label{fig:U_vela_extQ}
    \end{figure}

            




            

\noindent Let us now continue our analysis in the case in which {\em $A$ is the origin  and $B=(b_1,b_2)$ is a singleton.} 
Since the starting point $A=(0,0)$ belongs to every quadrant,  the control system under consideration satisfies also the following further invariance condition
\begin{equation}\label{eq:px_invariance}
\mbox{{\em the quadrant  containing $p(t)$ is invariant also for the trajectory $x(t)$}.}
\end{equation}
{The reader can easily verify this fact by checking that, when $x(t)$ lies on the boundary of the chosen quadrant, the corresponding vector field does not point outward; hence the trajectory either remains on the boundary or enters the interior.} 


 We could say that the quadrants  are {\em mutually invariant sets}. 
A remarkable consequence of  mutual invariance it that {\em $p(t)$ (and hence $x(t)$) must evolve in the same mutually invariant set containing the target point $B=x(t_f)$.} Indeed, if by contradiction $p(t)$ would evolve in a different invariant set  then, by \eqref{eq:px_invariance}, also the trajectory (that starts from $x(0)=0$ belonging to all invariant sets) would evolve in the same invariant set and could not reach $B$.       

As a consequence, {\em if $B$ belongs to the intersection of two invariant sets (such as the axes), also the optimal trajectory $x(t)$  must belong to the intersection.} For instance, if $B=(0,2)$ then the optimal control must be $u(t)=v_2=(0,1)$ for every $t$. This corresponds to case {\em(\ref{it:ZNE0})} of   Theorem \ref{th:char_u_convex-case}: there exists a global regular regime of abnormal kind.   

Summarizing, in general, states and co-states evolve inside the same quadrant $Q_i$ containing the target point $B$ and the optimal control is a measurable selection of controls in the set $\ext(U)\cap Q_i$ (see Figure~\ref{fig:U_vela_extQ}).  

 Besides the, already mentioned,  trivial cases in which $B$ belongs to the axes (intersection between two quadrants), we can conclude that 
\begin{itemize}
\item  {\em if $B$ belongs to the interior of the  first quadrant then the optimal controls are bang-bang combinations of the constant controls $v_1$ and $v_2$ with   at most one switching point (corresponding to a tacking maneuver in practical sailing).} Indeed, such controls are characterized to be union of at most two contiguous regular regimes in each of which $p(t)$ evolves inside one of the cones $N_U(v_1)$ or $N_U(v_2)$ (see Figure \ref{fig:U_vela_extQ}) and the control $u(t)$  takes the constant value $v_1$ or $v_2$, respectively.  The transition between the two  regimes may happen at most  once   when $p(t)$ belongs to the diagonal $N_1=N_U(v_1)\cap N_U(v_2)$.   
\item {\em If $B$ belongs to the interior of $Q_3$ then $u(t)$ is 
characterized by the union of at most two contiguous regular regimes in each of which $u$ is a smooth 
 selection from one among the  small arcs $A_1$ and $A_2$, {\color{black}\sout{according to ZNE (see {\em(\ref{th:regular})} of  Theorem \ref{th:char_u_convex-case}).}}} 
 {\em  Switching from an arc to the other {may occur at most once and}   corresponds to  a jibing maneuver in practical sailing. }
\item {\em If $B$ belongs to the interior of the other two quadrants then $u(t)$ is a smooth selection from the connected arc of controls belonging to that quadrant 
and the optimal control results in a global regular regime  
governed by ZNE;} no tacks or jibes occur when navigating  in these  quadrants.
\end{itemize}
When $B=(b_1,b_2)\in Q_1$, the (unique) optimal control can be explicitly computed. 
In fact, by using $v_1$ and $v_2$ we can construct two different strategies with at most one switching point, as follows: 
\begin{enumerate}
\item[(S1)] starting by going downstream with $v_1$:   
\begin{enumerate}
\item[-] compute the trajectory that starts from the origin with control $v_1$,
\item[-] compute the trajectory that a certain time $t_f$ passes through the point $B$ with control $v_2$,
\item[-] find the switching point at the intersection of the two trajectories computed above;
\end{enumerate}
\item[(S2)]  starting by going upstream with $v_2$: as before but  exchanging the controls $v_1$ and $v_2$.
\end{enumerate} 
{On the other hand, the strategy (S2) turns out to be suboptimal by observing that the triangle $T_1=\{(x_1,x_2)\ : x_2>x_1>0\}$ is invariant for $p$ (simply looking at the flow of $p(t)$, see Figure \ref{fig:U_rombo_opt}(a)).

\begin{figure}[h!] 
        \centering
             \includegraphics[width=0.49
            \textwidth]{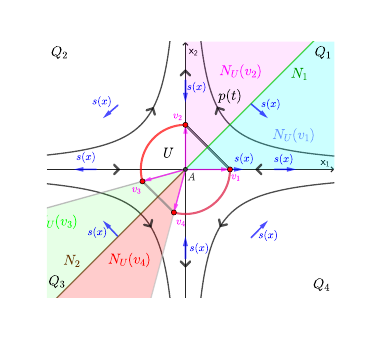}
\includegraphics[width=0.49\textwidth]{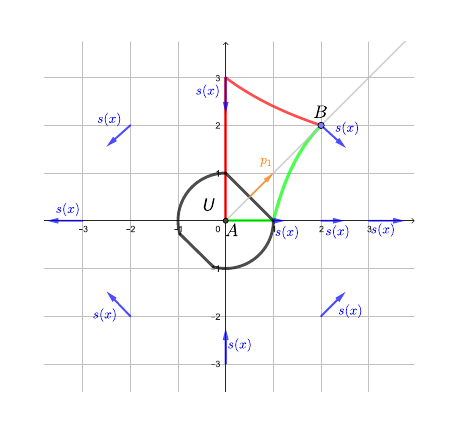}
(a)\hspace{52ex}(b)

            \caption{(a) The flow of the adjoint state $p(t)$.\ \ 
            (b) Optimal (in green) and suboptimal (in red) trajectories   for $\delta=1/4$ and $s_0=0$}
        \label{fig:U_rombo_opt}
    \end{figure}
}
            

{By computing for $\delta>0$ small enough (for brevity we omit the details) we found that
with strategy (S1) a solution exists for any $\delta<{1}/{b_2}$ and the final time turns out to be 
$$
t_f^{(1)}=\frac{1}{\delta}
\log\frac{1+b_1\delta(1-b_2\delta)}{(1-b_2\delta)}.
$$}
We can conclude that the strategy (S1) is optimal and the corresponding bang-bang  optimal control consists of two contiguous regular regimes corresponding to the open intervals $(0,t_*)$ and $(t_*,t_f)$.

For example, for $B=(2,2)$ and $\delta=1/4$ we have  $t_f^{(1)}=4\log\frac52$ and $t_f^{(2)}=4\log6$; 
the trajectory corresponding to the best strategy (S1) is displayed in green in  
 Figure \ref{fig:U_rombo_opt}(b), while that corresponding to  (S2) is displayed in red. 

 {\color{black} We conclude the analysis of this case by observing that the support function of $U$ is easily found to be
 $$
 \sigma_U(p)=\begin{cases}
 \max\{p_1,p_2\}&\mbox{ if }p\in Q_1,\\
 \sqrt{p_1^2+p_2^2}&\mbox{ if }p\in Q_2\cup Q_4\cup Q_3\setminus(N_U(v_3)\cup N_U(v_4)),\\
\langle p,v_3\rangle&\mbox{ if }p\in N_U(v_3),\\
\langle p,v_4\rangle& \mbox{ if }p\in 
N_U(v_4).
 \end{cases}
 $$
One can easily check that $\sigma_U\in C^1(\R^2\setminus(N_1\cup N_2))$ as predicted by Lemma \ref{lem:geoZNE}, and fails to be $C^2$ on the rays from the origin and passing through the corners $v_i$, $i=1,2,3,4$.
The claimed validity of ZNE in the interior of $Q_2$ and $Q_4$ follows then by the fact that $\sigma_U$ is $ C^2(\inter{Q2}\cup\inter{Q_4})$ and that $p$ evolves inside such open sets (hence Theorem \ref{th:char_u_convex-case} applies with $P=\inter{Q2}\cup\inter{Q_4}$). 

Inside $\inter{Q_3}$ the situation is more complicated. In this case, by examining the evolution of $p$, to be precise we can say that the optimal control will evolve according to ZNE with the possible exception of at most  two instants $t_3$ and $t_4$ at which $p$ enters the cone $N_U(v_3)$ and/or exits the cone $N_U(v_4)$, respectively. At these times $u$ is still continuous as stated by Theorem \ref{th:char_u_convex-case}, but the derivative may have jump discontinuities. 
 } 
 } 
\end{example}


\begin{example} \label{ex:singular_must_occur}{Let us consider a linear  current $s(x)= D^\top x$, like in the previous Example \ref{ex:convex_sailboat_detailed}, but with a
set $U=\{(u_1,u_2)\in\R^2\ :\ |u_1|\le1,\, |u_2|\le1\}$ that is a square  (see Figure \ref{fig:U_quadrato}). 
\begin{figure}[h!] 
        \centering
            \includegraphics[width=0.4\textwidth]{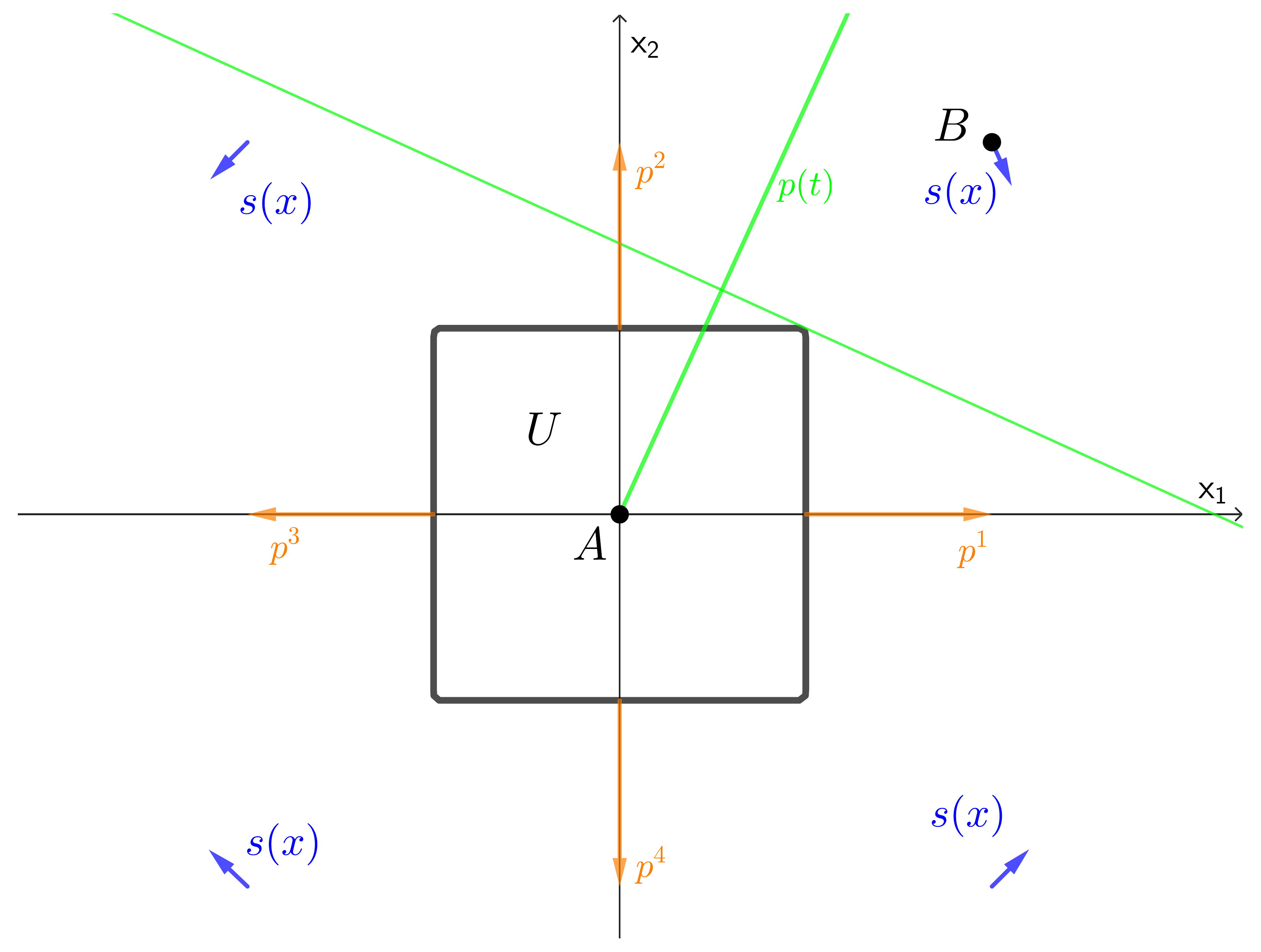}
            \caption{The case of Example \ref{ex:singular_must_occur} }
        \label{fig:U_quadrato}
    \end{figure}
It is a convex set with a boundary $\partial U$ containing $4$ disjoint maximal open segments $S_1$, $S_2$, $S_3$ and $S_4$ with normal cones generated by the vectors $p^1=(1,0)$, $p^2=(0,1)$, $p^3=(-1,0)$, $p^4=(0,-1)$. 
Differently from before, all such vectors are eigenvectors of $D$, then the necessary condition of Theorem  \ref{th:sing_reg_NC} is satisfied and a singular regime may occur.

Quadrants are still mutually  invariant sets.  Inside regular regimes the optimal control can select only extreme points of $U$ (see Theorem \ref{th:char_u_convex-case}).  On the other hand, in each quadrant there is  just one  extreme control (corresponding to the vertex
belonging to that quadrant) which, 
in some cases (e.g., $B=(2,2)$), is not enough to reach the target. This implies the necessity of a singular regime, which must be also global by Theorem \ref{th:affine_regimes}. Actually, the optimal control must select control vectors  belonging to only one segment of the boundary.     
}
\end{example}

\section{Remarks on the non-convex case}\label{sec:nonconvex}

A comprehensive discussion of the minimum-time problem in the case of a non-convex control set $U$ would be too extensive to be addressed in this work, which is already rather substantial. Nevertheless, some remarks are in order, as they follow immediately from the results obtained in the convex case. We briefly present them in this final section.

The fact that under regular regimes the 
optimal control be a measurable selection of extreme points of $U$, that is (see Theorem \ref{th:char_u_convex-case})
$u(t)\in\ext(U)\ \mbox{ for a.e.\ }t\in[0,t_f]$, 
has the remarkable consequence that whenever singular regimes are excluded (as, e.g., in Example \ref{ex:convex_sailboat_detailed}) 
then any other minimum-time problem with the same constraints, but posed on a control set $V\subseteq U$ (possibly  not convex) containing all extreme points of $U$, has the same optimal solutions. Precisely,  the following theorem holds. 

\begin{theorem}
Let $(x,u)$ be a solution to  problem $\Pcal_{U}$ (see Section \ref{sec_Problem_setting}) with $U$ a compact convex body  of $\R^2$, {$s\in C^1$,}    and suppose that singular regimes do not occur. Assume, moreover, that  $V\subseteq\R^2$ be such that $U$ is the convex envelope of $V$.   Then $(x,u)$ is a solution also for $\Pcal_{V}$. 
\end{theorem}


\begin{proof}  Since $V\subseteq U$, then any solution $u$ to $\Pcal_{U}$ such that $u(t)\in V$ for a.e.\ $t\in[0,t_f]$ is a solution also for $\Pcal_{V}$. And this is exactly what happens because, by assumption, 
 singular regimes are excluded and therefore, by  Theorem \ref{th:char_u_convex-case}, the optimal  control $u$  is a measurable selection of extreme points of $U$, that is,
$u(t)\in\ext(U)$  for a.e.\ $t\in[0,t_f]$. 
On the other hand we have that  $\ext(U)=\ext(\co(V))\subseteq V$ (see \cite[Section A.5, Exercise 11]{hiriart-urruty_fundamentals_2004}) and, hence,  $u(t)\in V$ for almost every $t$ as claimed. 
\end{proof}

In the case of a constant current, considered in Subsection \ref{subsec:constant_current}, our Theorem \ref{th:char_ABpoints_s=const}
admits the following corollary that, for simplicity, is stated for $s=0$ (as usual, the corresponding result for a constant $s=s_0$ is obtained by replacing $U$ with $U_0$). See Figure \ref{fig:2nonconvex}(a) for an illustration of part {\em (2)} below. 

\begin{corollary}\label{cor:nonconvex_constant}
Suppose that $V$ is a compact subset of $\R^2$ and that $U=\co(V)$ is a convex body  with a finite number of segments on the boundary and such that $0\in\inter{U}$.  Suppose moreover that $A$ and $B$ are singletons and $s=0$.   
Denoting by $C$ the (unique) intersection point between $\partial (U+A)$  and the ray originating in $A$ and passing through $B$, we have:
\begin{enumerate}
    \item if $C$ in an extreme point of $U$, then $\ub=C-A$ is an optimal control and the segment $[A,B]$ is an optimal trajectory for $\Pcal_{V}$,
\item if $C$ is an interior point of a segment $S+A$ on the boundary of $U+A$, then all  trajectories  that go from $A$ to $B$ by selecting controls in $\overline{S}\cap V$  are optimal for $\Pcal_V$,  and at least one of them exists; if $\#(\overline{S}\cap V)<+\infty$ then the optimal control is piecewise constant and the optimal trajectory is piecewise linear.
\end{enumerate}
\end{corollary}


\begin{proof}
First of all, let us remark that by Corollary \ref{cor_weak_stream_ex} a solution to problem $\Pcal_{A,B,U,0}$ exists. Second, by the assumption $0\in\inter{U}$ and the convexity of $U$, the ray $[A,B\rangle$ intersects $U+A$ in exactly one point $C$. 

In case {\em (1)}, namely, if $C\in \ext(U)$, then, by Theorem \ref{th:char_ABpoints_s=const} (and the subsequent Remark~\ref{rem:Cubar_geo}), the optimal control for $\Pcal_U$ is the constant $C-A=:\ub$ and the trajectory is the segment connecting $A$ and $B$. Since  $\ext(U)=\ext(\co(V))\subseteq V$ (see \cite[Section A.5, Exercise 11]{hiriart-urruty_fundamentals_2004}), then 
$\ub\in V$ and the constant control $\ub$ is optimal also for $V$ (because $V\subseteq U$).  

In case {\em (2)}, by the second part of Theorem \ref{th:char_ABpoints_s=const} all trajectories that go from $A$ to $B$ by selecting controls in $\overline{S}$  are optimal for $\Pcal_U$. Since $V\subseteq U$ it turns out that, among all these trajectories, those that select only controls in
$\overline{S}\cap V$  are optimal for $\Pcal_V$. On the other hand, at least one of such trajectories exists because $\overline{S}\cap V$ contains at least the endpoints of the segment (which belongs to $\ext(U)$ and hence to $V$, as noted also before) and, via Bang-bang Theorem,  a trajectory ending in $B$ can be constructed by using only these two points (see \cite[Corollary 3.10.2]{bressan_introduction_2007}).  
\end{proof}

\begin{figure}[h!] 
  \hspace{4ex}      \begin{minipage}{0.40\textwidth}
            \begin{center}
 \includegraphics[width=0.8\textwidth]{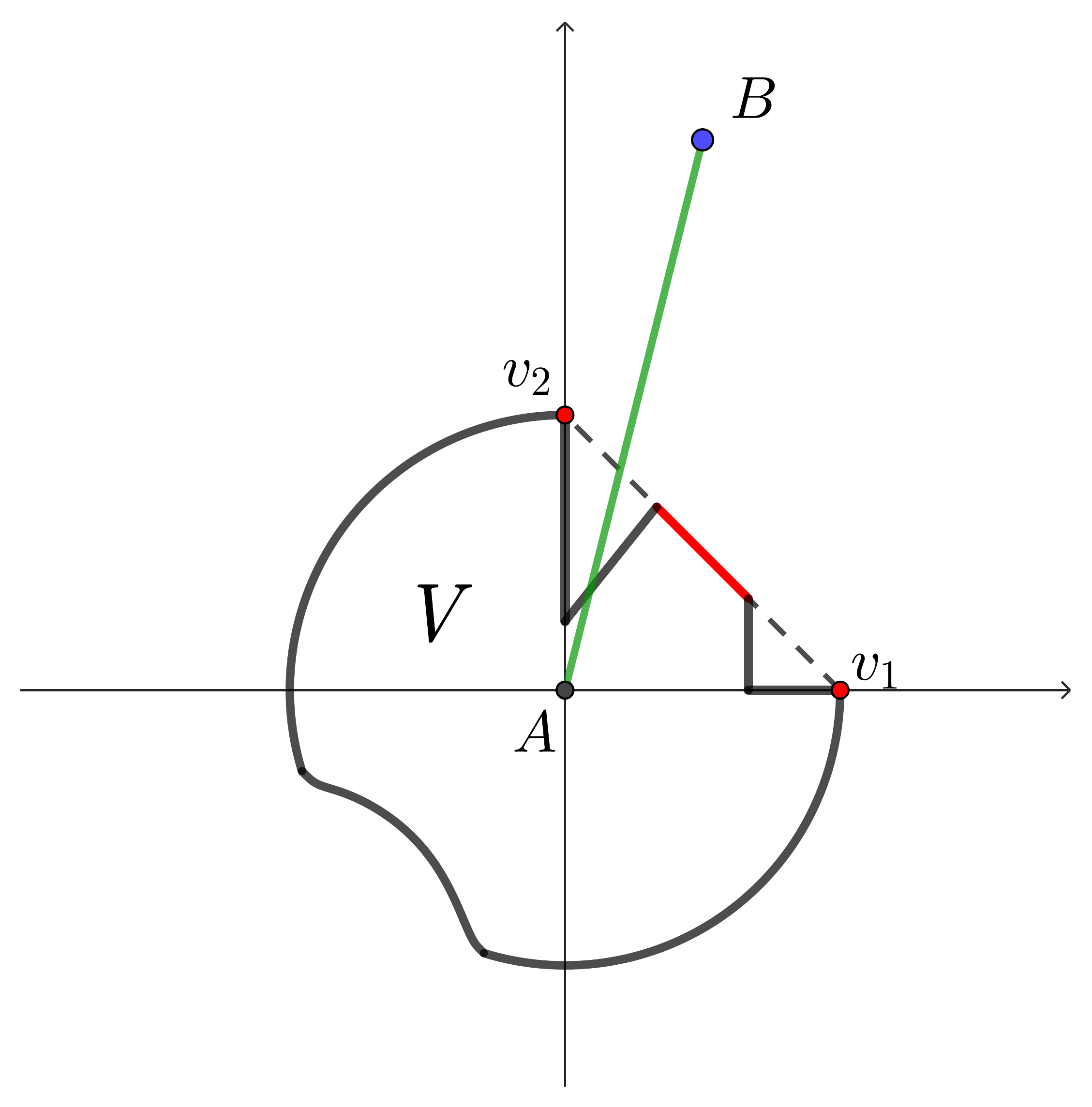}

\vspace{1ex}

(a)
\end{center}
        \end{minipage}
        \begin{minipage}{0.55\textwidth}

            \vspace{-5ex}
            \begin{center}

         \includegraphics[width=0.9\textwidth]{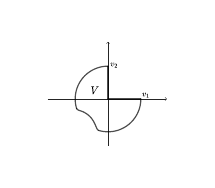}

\vspace{-5ex}

            (b)

            \end{center}
        \end{minipage}
        \caption{(a) Case {\em(2)} of Corollary \ref{cor:nonconvex_constant}.   The set $\overline{S}\cup V$ is displayed in red. (b) Typical set of velocities of a sailing boat.}
        \label{fig:2nonconvex}
    \end{figure}

            


\begin{example}\label{ex:non_multiconvex}{The case of a non-convex control set is particularly interesting in sailing navigation where the set $V$ of admissible velocities of the sailing boat is always non-convex, like in Figure~\ref{fig:2nonconvex}(b)). 
As in the picture, the convex hull typically presents two segments on the boundary.
The case of a linear current has been discussed  in Example \ref{ex:convex_sailboat_detailed} (see also Figure \ref{fig:U_vela_convex_2segmenti}). Since, there, singular regimes are not allowed, then 
 the optimal solutions of the convexified problem are optimal also for the 
non-convex navigation problem posed on $V$.

Differently, in the case of a constant current, Corollary \ref{cor:nonconvex_constant} applies and every point $B$ of the first quadrant can be reached in minimum time by any zig-zag trajectory generated by a bang-bang combination of the extreme controls $v_1$ and $v_2$.
}
\end{example}


\begin{remark} (a) {Corollary \ref{cor:nonconvex_constant} recovers the results on multi-convex sets of Lemma 4.9 and Theorem 4.9 of \cite{MPRR2025}. On the other hand,  our result is more general because it is not restricted to multi-convex sets (see, e.g., Example \ref{ex:non_multiconvex}).  



(b) We conclude by observing that  {\em tack points}, in the sense of discontinuity points of the derivative of the optimal trajectory, can arise not only in the non-convex case (as, e.g., in the navigation of a sailing boat) but also in the (non-strictly) convex case as separation points of contiguous regimes (see Definition \ref{def:regimes}, Theorem \ref{thm:splitting}, Theorem \ref{th:char_u_convex-case} and Example \ref{ex:convex_sailboat_detailed}).
}    
\end{remark}

\section{Conclusions and future  research}

We have studied a generalized version of Zermelo’s navigation problem in which the control set  of admissible velocities is a general convex body. Under the natural assumption of weak currents, we established existence results and derived necessary optimality conditions through Pontryagin's Maximum Principle and tools from convex analysis. We proved that the time horizon can be split in regular regimes in which 
the optimal control is smooth and satisfies a non-parametric Zermelo-like navigation equation and singular regimes in which the optimal control is largely undetermined. Necessary conditions for the occurrence of such  singular regimes are investigated, and  applications are given. 
A detailed study has been devoted to the case of a position-affine current with a $t$-independent jacobian. 
We have shown that regular and singular regimes cannot coexist: the system admits either a single global singular regime or contiguous regular regimes, which reduce to at most two when the jacobian has real eigenvalues, excluding vortical flows.

The theoretical results were illustrated with numerical examples relevant in practical navigation. Some results on the non-convex case are derived as direct consequences of those obtained in the convex setting.

Future research will more deeply investigate the important case of non-convex control sets, where the lack of convexity introduces additional mathematical challenges and potentially richer control structures. Another promising direction is to extend the results presented here specifically in the planar case to higher dimensions, investigating whether analogous simplifications of the optimality conditions hold beyond the two-dimensional case.

\

\noindent
{\bf Acknowledgements.}  
The authors are members of GNAMPA--INdAM. The present work was supported by the European Social Fund Plus (ESF+) – Regional Programme 2021-2027 of the Autonomous Region Friuli Venezia Giulia [Specific Program 22/23, Linea A - PhD Scholarship]. CUP: G23C23001130008

\bibliographystyle{abbrv}

\bibliography{freddi}

\end{document}